\newcommand{\N}{\mathbbm{N}}
\renewcommand{\pmod}[1]{\, (\operatorname{mod } #1)}
\renewcommand{\bar}{\overline}
\newcommand{\trace}{\operatorname{trace}}
\let\part\relax % this is still bad
\DeclarePairedDelimiter\part{\langle}{\rangle}
\DeclarePairedDelimiter\res[]
\newcommand\mvdots{\mathmakebox[\widthof{${}={}$}]{\vdots}}
\theoremstyle{plain}%default
\newtheorem{thm}{Theorem} %nummereringen foelger section
\newtheorem*{thm*}{Theorem} % thm* er nu uden nummer
\newtheorem{prop}[thm]{Proposition}
\newtheorem{lemma}[thm]{Lemma}
\newtheorem{cor}[thm]{Corollary}
\theoremstyle{definition}
\newtheorem{defn}[thm]{Definition}
\newtheorem{example}[thm]{Example}
\theoremstyle{remark}
\newtheorem*{remark*}{Remark}
\newtheorem*{claim*}{Claim}
\newcommand{\refthm}[1]{Theorem~\ref{#1}}
\newcommand{\reflemma}[1]{Lemma~\ref{#1}}
\newcommand{\refcor}[1]{Corrolary~\ref{#1}}
\title{On lower bounded orbits of the times-q map}
\author{Jonas Lindstr\o{}m Jensen\footnote{Email: jonas@imf.au.dk}}
\date{}
\begin{document}
\maketitle
\begin{abstract}
In this paper we consider the times-$q$ map on the unit interval as a subshift of finite type by identifying each number with its base $q$ expansion, and we study certain non-dense orbits of this system where no element of the orbit is smaller than some fixed parameter $c$.

The Hausdorff dimension of these orbits can be calculated using the spectral radius of the transition matrix of the corresponding subshift, and using simple methods based on Euclidean division in the integers, we completely characterize the characteristic polynomials of these matrices as well as give the value of the spectral radius for certain values of $c$. It is known through work of Urbanski and Nilsson that the Hausdorff dimension of the orbits mentioned above as a map of $c$ is continuous and constant almost everywhere, and as a new result we give some asymptotic results on how this map behaves as $q \to \infty$.
\end{abstract}

\section{Introduction}
In this paper we study the set
\[ F_c^q = \{x \in [0,1) \mid \{q^n x\} \geq c \text{ for all } n \geq 0\} \]
where $q \geq 2$ is an integer and $\{\cdot\}$ denotes the fractional part. This set is related to badly approximable numbers in Diophantine approximation, and has been studied by Nilsson \cite{lit:nilsson}, who studied the Hausdorff dimension of the set as a map of $c$, and in more generality by Urbanski \cite{lit:urbanski} who considered the orbit of an expanding map on the circle.

As Nilsson did we will consider $F_c^q$ as a subshift of finite type which enables us to see it as a problem in dynamical systems. When studied as a subshift of finite type we can find the dimension of $F_c^q$ using the spectral radius of the corresponding transition matrix, and this motivates the theorem of this paper which characterizes the characteristic polynomial of this matrix.

The author would like to thank his PhD supervisor Simon Kristensen and he would also like to than Johan Nilsson for
reading and commenting on an early version of this paper. 

\section{Basic definitions}
We begin with a definition of part and residue which comes from elementary integer division with residue. We let $q \geq 2$ be an integer throughout the paper and start with a well known result.
\begin{prop} For integers $n \in \N$ and $m \geq 0$ there are unique integers $\part{n,m} \in \N$ \emph{(part)} and $\res{n,m}$ (residue), with $0 \leq \res{n,m} < q^m$ such that
\[ n = q^m \part{n,m} + \res{n,m}. \]
\end{prop}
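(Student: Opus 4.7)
The plan is to apply the standard division algorithm to $n$ with divisor $q^m$. Since $q \geq 2$ and $m \geq 0$, the number $q^m$ is a positive integer, so the claim reduces to the classical Euclidean division theorem. Nevertheless I would give a brief self-contained argument, both to keep the exposition elementary and to fix the notation $\part{n,m}$ and $\res{n,m}$ that will be used throughout the rest of the paper.

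For existence I would define $\part{n,m}$ to be the largest non-negative integer $k$ with $q^m k \leq n$. The set $\{k \in \N : q^m k \leq n\}$ contains $0$ and is bounded above by $n$ (since $q^m \geq 1$), so a maximum exists. I then set $\res{n,m} := n - q^m \part{n,m}$, which is non-negative by construction; if we had $\res{n,m} \geq q^m$, then $q^m(\part{n,m}+1) \leq n$, contradicting the maximality of $\part{n,m}$. Hence $0 \leq \res{n,m} < q^m$.

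For uniqueness I would assume two representations $n = q^m a + r = q^m b + s$ with $a,b \in \N$ and $0 \leq r,s < q^m$. Rearranging gives $q^m (a - b) = s - r$, whose right hand side has absolute value strictly less than $q^m$. Since the only multiple of $q^m$ in that range is $0$, this forces $a = b$ and hence $r = s$.

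There is no real obstacle here, as the statement is entirely standard. The only boundary case worth mentioning is $m = 0$, where $q^m = 1$, giving $\part{n,0} = n$ and $\res{n,0} = 0$, which satisfies $0 \leq 0 < 1$ as required.
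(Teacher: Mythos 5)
Your proof is correct and is exactly the classical existence-and-uniqueness argument for Euclidean division; the paper itself gives no proof of this proposition, introducing it only as ``a well known result'' and immediately moving on to the base-$q$ interpretation of $\part{n,m}$ and $\res{n,m}$. So there is nothing in the paper to diverge from, and your self-contained argument (including the observation that the boundary case $m=0$ gives $\res{n,0}=0$, which the paper later relies on when noting $l_m$ is well defined) is the standard and appropriate one.
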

We note that if we write $n = n_k \cdots n_1$ in base $q$ it is easy to find the part and the residue, since $\res{n,m} = n_m \cdots n_1$ and $\part{n,m} = n_k \cdots n_{m+1}$.

The matrix we will consider in this paper is defined as follows.
\begin{defn} For $m \geq 1$ we define a 0-1 matrix $A_m$ of size $q^m \times q^m$ by
\[ (A_m)_{ij} = 1 \iff \res{i-1,m-1} = \part{j-1,1}. \]
We let $A_m(P)$ with $P \subseteq \{1,2,\ldots,q^m\}$ be the $\# P \times \# P$ matrix made from picking only the rows and columns from $A_m$ corresponding to the elements in $P$ and for $0 \leq k \leq m$ we let $A_m(k)$ be the $(m-k) \times (m-k)$ matrix where we have removed the first $k$ rows and columns from $A_m$.
\end{defn}
We will often omit the dependency on $m$ when it is not confusing. Considering $i$ and $j$ in base $q$ we see that $(A_m)_{ij} = 1$ if and only if the first $m-1$ digits of $j-1$ are equal to the last $m-1$ digits of $i-1$. So when $c = \frac{i}{q^m}$ we see that the base $q^m$ expansions of the numbers in $F_c^q$ can be seen as a subshift of finite type with transition matrix $A_m(i)^m$. The metric of the subshift and the unit interval are equivalent so the dimensional properties are the same. In particular, finding the Hausdorff dimension of $F_c^q$ now boils down to finding the spectral radius $\rho(A_m(k))$, since
\begin{equation}\label{eq:hausd} \dim_H F(c) = \frac{\rho (A_m(i)^m)}{\log q^m} = \frac{\rho (A_m(i))}{\log q}. \end{equation}
For a proof of the first equality see \cite{lit:pesin}. This is why we were interested in finding the characteristic polynomial of $A_m(i)$. The main theorem of this paper is a complete characterization of these polynomials. In order to state this theorem we need the following definition.
\begin{defn} For integers $n,m \geq 1$ with $0 \leq n < q^m$ we define
\[ l_m(n) = \min\{j \in \N \mid 1 \leq j \leq m \text{ and } \part{n,j} \geq \res{n, m-j} \}. \]
Using this definition we let
\[ \bar{n}_m = n - \res{n,m-l_m(n)} = q^{m-l_m(n)} \part{n,m-l_m(n)} \]
be the \emph{minimal prefix} of $n$. 
\end{defn}
This is well defined since $\res{n,0} = \part{n,m} = 0$ for any $n$ with $0 \leq n < q^m$. The notion of minimal prefix is taken from Nilsson \cite{lit:nilsson}, but is here defined somewhat differently since we only consider finite sequences.

Let us consider some examples.
\begin{example} Let $q=3, m=3$. Then
\[ \part{11,1} = 3 \geq 2 = \res{11,2} \]
so $l_3(11)=1$ and
\[ \bar{11}_3 = 11 - \res{11,2} = 9. \]
If we let $n=7$ we have
\[ \part{7,1} = 2 < 7 = \res{7,2} \]
and
\[ \part{7,2} = 0 < 1 = \res{7,1} \]
but
\[ \part{7,3} = 0 = \res{7,0} \]
so $l_3(7)=3$ and $\bar{7}_3 = 7$.
\end{example}

We are now ready to state the main theorem.
\begin{thm}\label{thm:char} Let $i$ be an integer such that $0 < i < q^m$ and let $f_i^m(x)$ be the characteristic polynomial of $A_m(i)$. Then 
\[ f_i^m(x) = g_i^m(x) x^{q^m-m-i} \]
where
\[ g_i^m(x) = x^m - a_1 x^{m-1} - \cdots - a_m \]
and $a_1 a_2 \ldots a_m$ is the base $q$ expansion of $q^m-\bar{i}_m$.
\end{thm}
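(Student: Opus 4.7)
The plan is to compute $\det(xI - A_m(i))$ via a rank-factorization exploiting the strong row-redundancy of $A_m$: since $(A_m)_{jk}$ depends on $j$ only through $\res{j-1, m-1}$, many rows of $A_m$ and of $A_m(i)$ are identical. For the unrestricted matrix this gives $A_m = PQ$ with $QP = A_{m-1}$, from which $\det(xI - A_m) = x^{q^m-1}(x-q)$ follows by induction on $m$. I would adapt this idea to the restricted matrix $A_m(i)$.

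Let $B^* \subseteq \{0,\ldots,q^{m-1}-1\}$ consist of those $b$ for which some $n \geq i$ satisfies $\res{n, m-1} = b$, and define $P_i : \R^{B^*} \to \R^V$ by row-replication together with $Q_i : \R^V \to \R^{B^*}$ by summation over the surviving successors in $V$, so that $A_m(i) = P_i Q_i$. Since $P_i Q_i$ and $W := Q_i P_i$ share their non-zero spectra,
\[
\det(xI - A_m(i)) = x^{q^m - i - |B^*|}\det(xI - W).
\]
The matrix $W$ is essentially $A_{m-1}$ with a row-dependent edge-deletion: $W_{b,b'} = 1$ iff $b'$ is a successor of $b$ in the $(m-1)$-level de Bruijn graph and $q^{m-1}\part{b, m-2} + b' \geq i$. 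Writing $i = a_0 q^{m-1} + b_0$, the rows of $W$ split by the leading digit $d = \part{b, m-2}$ into zero rows ($d < a_0$, contributing further $x$-factors), truncated rows ($d = a_0$, imposing $b' \geq b_0$), and unrestricted rows ($d > a_0$); the unrestricted rows sharing a common $\res{b, m-2}$ are again identical, so $W$ itself has the same redundancy structure and the reduction can be continued.

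Iterating this reduction, each step corresponds to a Euclidean-division step stripping one base-$q$ digit off $i$, contributing one coefficient $a_k$ of $g_i^m$ and further powers of $x$. The condition $\part{i, l_m(i)} \geq \res{i, m - l_m(i)}$ defining the minimal prefix is exactly the condition that the boundary truncation vanishes after $l_m(i)$ iterations, at which point the surviving core block is equivalent to the companion matrix of $g_i^m(x) = x^m - a_1 x^{m-1} - \cdots - a_m$, with $a_1,\ldots,a_m$ being the base-$q$ digits of $q^m - \bar{i}_m$ accumulated along the way; multiplying the collected $x$-factors then yields the predicted exponent $q^m - m - i$. The main obstacle I anticipate is the combinatorial bookkeeping: precisely tracking how $|B^*|$, the boundary parameters $(a_0, b_0)$, and the accumulated powers of $x$ update across iterations, and verifying that the recursion stabilizes after exactly $l_m(i)$ steps with the claimed companion matrix form.
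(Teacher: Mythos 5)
Your approach is genuinely different from the paper's. The paper works combinatorially: it first shows (via the row/column-redundancy you also exploit) that the only nonzero principal minors of $A_m$ are permutation matrices, then characterizes the cycles and their least elements via the minimality condition $\res{n,m-l(n)}=\part{n,l(n)}$, introduces explicit ``up/down'' maps between cycles of $A_m(c)$ and $A_{m'}(c')$ to establish that $\trace A_m(c)^k$ is invariant under passing to $A_{l_M(c)}(\part{c,M-m})$, and finishes with Newton's identities. You instead propose a linear-algebraic reduction $A_m(i)=P_iQ_i$ with $\det(xI-P_iQ_i)=x^{\dim\text{-gap}}\det(xI-Q_iP_i)$, iterated. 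For the unrestricted matrix $A_m$ this is indeed a one-line proof of $\det(xI-A_m)=x^{q^m-1}(x-q)$, and I verified on small cases ($q=2$, $m\le3$) that one or two $PQ\to QP$ steps together with zero-row cancellation do reproduce $g_i^m(x)\,x^{q^m-m-i}$.

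However, there is a genuine gap in the key claim that ``$W$ itself has the same redundancy structure, so the reduction can be continued.'' It does not. After one reduction, $W_{b,b'}=[\res{b,m-2}=\part{b',1}]\cdot[q^{m-1}\part{b,m-2}+b'\ge i]$, so a row of $W$ depends on $b$ through \emph{both} $\res{b,m-2}$ (which determines the de~Bruijn successor set) \emph{and} the indicator $[\part{b,m-2}=a_0]$ (which determines whether the threshold $b'\ge b_0$ is active). For a fixed residue $r=\res{b,m-2}$ there is generically one truncated pattern ($d=a_0$) and one unrestricted pattern ($d>a_0$), and these are different whenever some successor of $r$ lies below $b_0$. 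So the nonzero distinct rows are parameterized by roughly $2q^{m-2}$ patterns, not $q^{m-2}$, and the reduced object is not of the form $A_{m-1}(j)$ for any single $j$; it carries a more complicated, row-dependent truncation. The recursion therefore does not close in the self-similar way you describe, and the assertions that each step ``strips one base-$q$ digit off $i$'' and that the process ``stabilizes after exactly $l_m(i)$ steps with the companion matrix of $g_i^m$'' are exactly the hard content of the theorem, not bookkeeping. The right invariant to propagate through the iteration (the analogue of the paper's minimality/minimal-prefix machinery, or of its trace-invariance theorem which requires the hypothesis $l_M(c)\ge m$) is not identified, so as written this is a promising idea but not a proof.
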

Notice that this implies the equality
\[ g_i^m(q) = \bar{i}_m, \]
and that
\[ \dim_H F_{i/q^m} = \frac{\log \rho}{\log q} \]
where $\rho$ is the Perron root of $g_i^m$.

\section{Proof outline}
First recall that we can find the characteristic polynomial $f_i^m(x) =
x^{q^m-i} - a_1 x^{q^m-i-1} \allowbreak - \cdots - a_{q^m-i}$ of $A_m(i)$ as
\begin{equation}\label{eq:minors} 
	a_j = (-1)^j \sum_{\# P = j} \det A_m(P)
\end{equation}
where we also require that $P \subseteq \{i+1,i+2,\ldots,q^m\}$, or as
\begin{equation}\label{eq:recur} 
  a_j = \frac{1}{j} \big( \trace A_m(i)^j + a_1 \trace A_m(i)^{j-1} +
  \cdots + a_{j-1} \trace A_m(i) \big). 
\end{equation}
The first formula is sometimes used as the definition of the characteristic polynomial, and for a proof of the latter see \cite{lit:faddeev}. We now try to outline the proof that essentially is the construction of an algorithm that calculates both the characteristic polynomial of $A_m(i)$ and $\bar i_m$.
\begin{itemize}
\item We prove that all the submatrices $A(P)$ that give non-zero principal minors are permutations, so when removing rows and columns from the first to the last, we only change the characteristic polynomial when removing rows and columns corresponding to the smallest element of a cycle.
\item If $l_m(i) = m$ then $i$ is the smallest element of an $m$-cycle and this is the only permutation of size $\leq m$ that has $i$ as an element. So removing $i$ decreases the $m$'th coefficient of the characteristic polynomial by $1$ and leaves all the preceding coefficients unchanged. On the other hand, if $l_m(i) = n < m$, then the nontrivial part of the characteristic polynomial, $g_i^m(x)$, can be found as $x^{m-n} g_{\part{i,m-n}}^n(x)$ since we have \eqref{eq:recur} and can prove that
\[ \trace A_m(i)^k = \trace A_n(\part{i,m-n})^k \]
for all $k \leq m$.
\item If $l_m(i) = m$, then $\overline{i}_m = \overline{i+1}_m - 1$, and if $l_m(i) = n < m$ then $\overline{i}_m = q^{m-n} \overline{\part{i,m-n}}_n$, so we see that $\bar i$ and the characteristic polynomials follow the same pattern.
\item Since the theorem is true for $m=1$, we can now use induction if $l_m(i) < m$. If not, we increase $i$ until we have $l_m(i) < m$, which happens since $l_m(q^m-1) = 1$.
\item The $m+1$'st, $m+2$'nd, \dots, $q^m$'th coefficient of $f_i^m(x)$
  are all zero, because we have found the first $M$ coefficients of
  the characteristic polynomial for any $M$, so if we pick $M > m$ and $K$ such that
  $l_M(K) = m$ and $\part{K,M-m} = i$, then we see that $g_K^M(x)$ has
  its $m+1$'th, $m+2$'th, \dots, $M$'th coefficients equal to zero, which will then also be true for $g_i^m(x)$. This finishes the proof of the theorem.
\end{itemize} 

\begin{sidewaystable}
\caption{Calculation of the characteristic polynomial of $A_3(i)$
  when $q=3$. We let $g_i^3(x) = x^3-a_1x^2-a_2x-a_3$. We also give the
  minimal prefix and the length of the minimal prefix. The numbers in
  bold indicates that we consider a minimal number with non-maximal
  prefix length $l(i) < 3$.} 
\addtolength\arraycolsep{1.3pt}
\[ \begin{array}{c !\quad  ccccccccccccccccccccccccccc}
\toprule
i & 0 & 1 & 2 & 3 & 4 & 5 & 6 & 7 & 8 & 9 & 10 & 11 & 12 & 13 & 14 & 15 & 16 & 17 & 18 & 19 & 20 & 21 & 22 & 23 & 24 & 25 & 26 \\ 
\midrule
\bar{i} & 0 & 1 & 2 & 3 & 4 & 5 & 6 & 7 & 8 & 9 & 9 & 9 & 9 & 9 & 14 & 15 & 15 & 17 & 18 & 18 & 18 & 18 & 18 & 18 & 18 & 18 & 18 \\
\midrule
a_1 & 3 & 2 & 2 & 2 & 2 & 2 & 2 & 2 & 2 & 2 & 2 & 2 & 2 & 2 & 1 & 1 & 1 & 1 & 1 & 1 & 1 & 1 & 1 & 1 & 1 & 1 & 0 \\
a_2 & 0 & 2 & 2 & 2 & 1 & 1 & 1 & 0 & 0 & 0 & 0 & 0 & 0 & 0 & 1 & 1 & 1 & 0 & 0 & 0 & 0 & 0 & 0 & 0 & 0 & 0 & 0 \\
a_3 & 0 & 2 & 1 & 0 & 2 & 1 & 0 & 2 & 1 & 0 & 0 & 0 & 0 & 0 & 1 & 0 & 0 & 1 & 0 & 0 & 0 & 0 & 0 & 0 & 0 & 0 & 0 \\
\midrule
\addlinespace[1.5ex]
\midrule
l(i)& 1 & 3 & 3 & 2 & 3 & 3 & 2 & 3 & 3 & 1 & 1 & 1 & 1 & 1 & 3 & 2 & 2 & 3 & 1 & 1 & 1 & 1 & 1 & 1 & 1 & 1 & 1 \\
\midrule
A_{i+1,i+1}
    & \mathbf{1} & 0 & 0 & 0 & 0 & 0 & 0 & 0 & 0 & 0 & 0 & 0 & 0 & \mathbf{1} & 0 & 0 & 0 & 0 & 0 & 0 & 0 & 0 & 0 & 0 & 0 & 0 & 1 \\
A_{i+1,i+1}^2
    & 1 & 0 & 0 & \mathbf{1} & 0 & 0 & \mathbf{1} & 0 & 0 & 0 & 1 & 0 & 0 & 1 & 0 & 0 & \mathbf{1} & 0 & 0 & 0 & 1 & 0 & 0 & 1 & 0 & 0 & \mathbf{1} \\
\bottomrule
\end{array} \]
\end{sidewaystable}

\section{Part and residue}
The results in this sections explain some properties of the part and residue functions and gives a characterization of the powers of $A$. We will use these results throughout the paper, often without specifically stating so. The proofs in this section are rather straightforward and may be skipped on a first read.
\begin{prop} 
\begin{enumerate}
\item For $j,k,n \geq 0$ we have $\res{\res{n,j}, k} = \res{n, \min \{j,k\}}$ and
\[ 
\part[\big]{\part{n,k}, j} = \part{n,k+j}. 
\]
\item For $j>k$ we have
\[ \part{\res{n,j},k} = \res{\part{n,k},j-k}. \]
\end{enumerate}
\end{prop}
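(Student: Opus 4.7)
The plan is to reduce every identity to the uniqueness part of the division algorithm: to show that an integer $R$ equals $\res{n,m}$, it suffices to exhibit a decomposition $n = q^m Q + R$ with $0 \le R < q^m$, and then $Q = \part{n,m}$ automatically. Each identity is nothing more than the visually obvious statement that truncating a base-$q$ expansion twice commutes and gives the expected middle block of digits; my task is to turn each such picture into an algebraic identity and appeal to uniqueness.

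For part 1, the residue identity $\res{\res{n,j},k} = \res{n,\min\{j,k\}}$ splits into two cases. If $k \ge j$, then $\res{n,j} < q^j \le q^k$, so the outer residue is the identity. If $k < j$, I start from $n = q^j \part{n,j} + \res{n,j}$ and expand $\res{n,j} = q^k \part{\res{n,j},k} + \res{\res{n,j},k}$, giving $n = q^k\bigl(q^{j-k}\part{n,j} + \part{\res{n,j},k}\bigr) + \res{\res{n,j},k}$; uniqueness identifies the last term with $\res{n,k}$. For the part identity, I use $n = q^k \part{n,k} + \res{n,k}$ combined with $\part{n,k} = q^j \part[\big]{\part{n,k},j} + \res{\part{n,k},j}$ and collect into $n = q^{k+j}\part[\big]{\part{n,k},j} + \bigl(q^k \res{\part{n,k},j} + \res{n,k}\bigr)$, the bracket being at most $q^{k+j}-1$, so uniqueness gives the claim.

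For part 2, I feed the previous work back in. Using $n = q^j \part{n,j} + \res{n,j}$ together with $\res{n,j} = q^k \part{\res{n,j},k} + \res{n,k}$ (the latter by part 1, since $j > k$), I rewrite $n = q^k\bigl(q^{j-k}\part{n,j} + \part{\res{n,j},k}\bigr) + \res{n,k}$. Uniqueness of division by $q^k$ forces $\part{n,k} = q^{j-k}\part{n,j} + \part{\res{n,j},k}$. Because $\res{n,j} < q^j$, the summand $\part{\res{n,j},k}$ is strictly less than $q^{j-k}$, so this last display is precisely the division of $\part{n,k}$ by $q^{j-k}$; its quotient is $\part[\big]{\part{n,k},j-k} = \part{n,j}$ (by part 1 again), and its residue is the desired $\part{\res{n,j},k} = \res{\part{n,k},j-k}$.

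I do not foresee any real obstacle here: everything is bookkeeping from a single division and a uniqueness appeal. The only care required is to treat the edge case $k \ge j$ separately in part 1 and to keep checking that each candidate residue actually lies in the correct interval $[0,q^m)$ so that uniqueness applies.
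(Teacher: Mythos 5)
Your proposal is correct and takes essentially the same route as the paper: reduce every claim to uniqueness of Euclidean division by exhibiting a decomposition $n = q^m Q + R$ with $0 \le R < q^m$, and use part 1 as a tool when proving part 2. Your bookkeeping in part 2 (one decomposition, uniqueness applied twice) is cosmetically different from the paper's (two decompositions, subtract and cancel), but the underlying idea is identical.
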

\begin{proof}
  Let us first prove the two equalities in 1. Since $\res{n,k}$ is the
  same as $n \pmod q^k$ we have the first equality. Now assume that
  $j+k \leq m$. Now $\part{n,k} = q^j \part[\big]{\part{n,k},j} +
  \res{\part{n,k},j}$, so
\[ 
n = q^k \part{n,k} + \res{n,k} = q^{k+j} \part[\big]{\part{n,k},j} +
q^k\res{\part{n,k},j} + \res{n,k},
\] 
but since $\res{\part{n,k},j} < q^j$ and $\res{n,k} < q^k$ we have
\[
q^k\res{\part{n,k},j} + \res{n,k} \leq q^k(q^j-1) + q^k-1 = q^{k+j} -
1 < q^{k+j},
\]
and by the uniqueness of the residue and parts we see that
$\part{\part{n,k},j} = \part{n,k+j}$. Now consider 2., so let $j > k$.
>From 1. we have
\[ 
\part{n,k} = q^{j-k} \part[\big]{\part{n,k},j-k} +
\res{\part{n,k},j-k} = q^{j-k} \part{n,j} + \res{\part{n,k},j-k} 
\]
and
\[ 
\res{n,j} = q^k \part{\res{n,j},k} + \res[\big]{\res{n,j},k} = q^k
\part{\res{n,j},k} + \res{n,k}. 
\]
So
\begin{align*} n 
&= q^k \part{n,k} + \res{n,k} \\
&= q^j \part{n,j} + q^k \res{\part{n,k},j-k} - q^k \part{\res{n,j},k} + \res{n,j} \\
&= q^j \part{n,j} + \res{n,j} + q^k (\res{\part{n,k},j-k} - \part{\res{n,j},k})
\end{align*}
and since $n = q^j \part{n,j} + \res{n,j}$ this implies that
\[ \res{\part{n,k},j-k} = \part{\res{n,j},k}. \]
\end{proof}

\begin{lemma}\label{lem:powersofA} Let $k$ be an integer with $1 \leq k \leq m$. Then $A^k_{ij} = 1$ if and only if
\[ \res{i-1,m-k} = \part{j-1,k}. \]
\end{lemma}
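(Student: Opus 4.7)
The plan is to proceed by induction on $k$, the base case $k = 1$ being nothing but the definition of $A = A_m$.

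For the induction step, suppose the lemma is known for some $k$ with $1 \leq k \leq m-1$. Writing
\[ A^{k+1}_{ij} = \sum_{\ell=1}^{q^m} A^k_{i\ell} A_{\ell j} \]
and inserting the inductive hypothesis and the definition of $A$, the summand indexed by $\ell$ is equal to $1$ precisely when
\[ \res{i-1,m-k} = \part{\ell-1,k} \quad \text{and} \quad \res{\ell-1,m-1} = \part{j-1,1}, \]
and is $0$ otherwise. The task therefore reduces to showing (a) that for every pair $(i,j)$ at most one $\ell$ satisfies both conditions, and (b) that such an $\ell$ exists if and only if $\res{i-1,m-k-1} = \part{j-1,k+1}$.

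Uniqueness in (a) is transparent once the conditions are read in base $q$: the first prescribes the top $m-k$ digits of $\ell-1$ and the second prescribes the bottom $m-1$ digits, and together these ranges cover all $m$ digit positions, so $\ell-1$ is fully determined whenever a compatible assignment exists. For the forward direction of (b) I would apply the preceding proposition on part and residue. Part 1 of that proposition, applied as one extra residue of the first condition and one extra part of the second, gives
\[ \res{i-1,m-k-1} = \res{\part{\ell-1,k},m-k-1} \quad \text{and} \quad \part{j-1,k+1} = \part{\res{\ell-1,m-1},k}, \]
and part 2 of the same proposition (with $n = \ell-1$, outer parameter $m-1$, inner parameter $k$) equates the two right-hand sides, yielding $\res{i-1,m-k-1} = \part{j-1,k+1}$.

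For the converse I would construct $\ell$ explicitly by prescribing its base-$q$ expansion from the data: the digits of $\res{i-1,m-k}$ in the top $m-k$ positions and the digits of $\part{j-1,1}$ in the bottom $m-1$ positions. The sole consistency check is on the overlap at positions $k+1,\ldots,m-1$, which is either empty (when $k = m-1$, so consistency is automatic) or is precisely the assumed equality $\res{i-1,m-k-1} = \part{j-1,k+1}$, again via part 2 of the proposition. I expect the main obstacle to be nothing more than bookkeeping—keeping indices, digit positions, and the arguments of $\part{\cdot,\cdot}$ and $\res{\cdot,\cdot}$ aligned through the repeated applications of the proposition—so the argument should amount to a short mechanical digit computation rather than anything conceptually novel.
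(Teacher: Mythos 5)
Your proposal is correct and follows essentially the same strategy as the paper: induction on $k$ with the base case given by the definition of $A$, followed by an application of the part--residue proposition to pass between the inductive hypothesis and the target identity, and an explicit construction of the unique intermediate index (the paper's $n$, your $\ell$) to establish existence and uniqueness. The only cosmetic difference is that the paper verifies uniqueness by an algebraic computation while you observe it directly from a digit count, but the underlying argument is the same.
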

\begin{proof} We will prove this by induction. For $k=1$ it is the definition of $A$, so assume that $1 < k \leq m$. We assume that the lemma is true for all smaller $k$. If $A^k_{ij} = 1$ there must exist some $n$ with $0 \leq n < q^m$ and $A_{nj} = 1$ and $A^{k-1}_{in} = 1$. Using the induction hypothesis we get
\begin{equation}\label{eq:eqs1} \res{i-1,m-k+1} = \part{n-1,k-1} \quad \text{and} \quad \res{n-1,m-1} = \part{j-1,1} \end{equation}
for this $n$. Now by part 2. of the above proposition we have
\[ \res{\part{n-1,k-1},m-k} = \part{\res{n-1,m-1},k-1}, \]
and using \eqref{eq:eqs1} we get
\[ \res[\big]{\res{i-1,m-k+1}, m-k} = \part[\big]{\part{j-1,1},k-1}, \]
and using part 1. of the proposition we get
\[ \res{i-1,m-k} = \part{j-1,k} \]
as desired.

Now assume that $\res{i-1,m-k} = \part{j-1,k}$. Let
\[ n-1 = q^{k-1} \res{i-1,m-k+1} + \res{\part{j-1,1},k-1}. \]
This is a positive integer smaller than $q^m$. By the uniqueness of the residue and parts we see that
\begin{equation}\label{eq:ni} \res{i-1,m-k+1} = \part{n-1,k-1} \end{equation}
and 
\begin{equation}\label{eq:jn} 
\res{\part{j-1,1},k-1} = \part{n-1,k-1}. 
\end{equation}
>From \eqref{eq:ni} and the induction hypothesis we see that
$A^{k-1}_{in} = 1$. We now want to prove that $A_{nj} = 1$. Recall
that we assume $\res{i-1,m-k} = \part{j-1,k}$, so
\begin{align*} \part{\res{n-1,m-1},k-1} 
&= \res{\part{n-1,k-1},m-k} \\
&= \res[\big]{\res{i-1,m-k+1},m-k} \\
&= \res{i-1,m-k} \\
&= \part{j-1,k}.
\end{align*}
Using this and \eqref{eq:jn} we see that
\begin{align*} \res{n-1,m-1} 
&= q^{k-1} \part{\res{n-1,m-1},k-1} + \res[\big]{\res{n-1,m-1},k-1} \\
&= q^{k-1} \part{j-1,k} + \res{n-1,k-1} \\
&= q^{k-1} \part{j-1,k} + \res{\part{j-1,1},k-1} \\
&= q^{k-1} \part[\big]{\part{j-1,1},k-1} + \res{\part{j-1,1},k-1} \\
&= \part{j-1,1}.
\end{align*}
This proves that $A^{k-1}_{in} = 1$ and $A_{nj}=1$ which implies that $A^k_{ij} > 0$. Now assume that there is another $n'$ such that $A^{k-1}_{in'} = 1$ and $A_{n'j}=1$. Then
\[ \res{i-1,m-k+1} = \part{n'-1,k-1} \]
and 
\[ \res{\part{j-1,1},k-1} = \part{n'-1,k-1} \]
so
\begin{align*} n'-1
  &= q^{k-1} \part{n'-1,k-1} + \res{n'-1,k-1} \\
  &= q^{k-1} \res{i-1,m-k+1} + \res[\big]{\res{n'-1,m-1},k-1} \\
  &= q^{k-1} \res{i-1,m-k+1} + \res{ \part{j-1,1},k-1} \\
  &= n-1,
\end{align*}
which proves that there can be only one such $n$, so $A^k_{ij} = 1$.
\end{proof}

\begin{lemma} \label{lem:ineqs} If $a,b,k$ is such that $\res{a,k} < \res{b,k}$ and $\part{a,k} = \part{b,k}$, then 
\[ \res{a,k+j} < \res{b,k+j} \]
for all integers $j$ with $0 \leq j \leq m-k$.
\end{lemma}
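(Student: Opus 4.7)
The plan is to derive a clean decomposition formula for $\res{n, k+j}$ in terms of $\res{n,k}$ and $\part{n,k}$, apply it to both $a$ and $b$, and subtract. The case $j=0$ is trivially the hypothesis, so I would assume $j \geq 1$ throughout the argument; this ensures $k+j > k$, which is exactly the condition needed to invoke part~2 of the preceding proposition.

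For any such $n$, Euclidean division of $\res{n,k+j}$ by $q^k$ gives
\[
\res{n,k+j} = q^k \part[\big]{\res{n,k+j},k} + \res[\big]{\res{n,k+j},k}.
\]
Part~1 of the preceding proposition simplifies the remainder term to $\res{n,k}$ (since $\min\{k+j,k\}=k$), and part~2 (applied with indices $k+j>k$) rewrites the quotient term as $\res{\part{n,k}, j}$. Combining yields the identity
\[
\res{n,k+j} = q^k \res{\part{n,k}, j} + \res{n,k}.
\]

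Applying this identity with $n=a$ and $n=b$ and subtracting, the high-order terms cancel because $\part{a,k} = \part{b,k}$ forces $\res{\part{a,k},j} = \res{\part{b,k},j}$. What remains is $\res{b,k+j} - \res{a,k+j} = \res{b,k} - \res{a,k}$, which is strictly positive by hypothesis, giving the claim. There is no real obstacle here beyond the bookkeeping of the indices in the preceding proposition; the argument is a direct symbolic manipulation rather than an inductive or combinatorial one, and the only subtle point is remembering that part~2 requires strict inequality, which is precisely why the $j=0$ case is handled separately.
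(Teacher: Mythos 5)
Your proof is correct, and it takes a genuinely different route from the paper. The paper's argument is shorter: it applies only the second identity of part~1 (the composition rule for parts) to conclude $\part{a,k+j} = \part{b,k+j}$, then observes that the hypotheses force $a < b$, whence the residues at level $k+j$ must satisfy $\res{a,k+j} < \res{b,k+j}$ by uniqueness of Euclidean division. You instead derive the explicit decomposition
\[
\res{n,k+j} = q^k \res[\big]{\part{n,k},\, j} + \res{n,k},
\]
which requires invoking both part~1 (for the low-order term) and part~2 (for the high-order term, which is why you must peel off $j=0$ separately), and then subtract. Your route is more computational, but it buys something: it shows not just the inequality but the exact identity $\res{b,k+j} - \res{a,k+j} = \res{b,k} - \res{a,k}$, i.e.\ the gap between residues is preserved under extending the window. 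The paper's argument is more economical but only yields the inequality. Both are valid; neither has a gap.
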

\begin{proof} If $\part{a,k} = \part{b,k}$ then
\[ \part[\big]{\part{a,k},j} = \part[\big]{\part{b,k},j}, \]
and hence
\[ \part{a,k+j} = \part{b,k+j}. \]
Since $a < b$ we thus have
\[ \res{a,k+j} < \res{b,k+j} \]
as desired.
\end{proof}

\section{Minimality}
We now prove the following rather simple lemma which states that the only non-zero principal minors can be found as submatrices of $A$ which are permutations.
\begin{lemma}\label{lem:mustbeperm} If $\det A(P) \neq 0$ then the corresponding matrix is a permutation matrix. \end{lemma}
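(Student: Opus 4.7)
The plan is to argue by contradiction: assuming $\det A(P) \neq 0$ while $A(P)$ is not a permutation matrix, I aim to produce two equal rows in $A(P)$.

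I would start by invoking the Leibniz expansion of the determinant. Since $A(P)$ is a $0$-$1$ matrix with $\det A(P) \neq 0$, at least one term in the expansion must be non-zero, which provides a permutation $\sigma$ of $P$ such that $A_{p,\sigma(p)} = 1$ for every $p \in P$. In particular every row of $A(P)$ already has at least one $1$, so if $A(P)$ is not a permutation matrix, some row $p \in P$ must carry an extra $1$ at a column $p^* \in P$ with $p^* \neq \sigma(p)$. I would fix such $p$ and $p^*$ and then search for a row equal to row $p$.

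The key is the observation that the row indexed by $p$ in the full matrix $A$ depends only on $\res{p-1,m-1}$: by the defining relation $(A)_{ij}=1 \iff \res{i-1,m-1} = \part{j-1,1}$, that row has its $1$'s exactly in the columns $j$ with $\part{j-1,1} = \res{p-1,m-1}$. So to produce a row equal to row $p$ it suffices to find a second index $p' \in P$ with $\res{p'-1,m-1} = \res{p-1,m-1}$. I would take $p' := \sigma^{-1}(p^*)$, which lies in $P$. Then $A_{p',p^*}=1$ yields $\res{p'-1,m-1} = \part{p^*-1,1} = \res{p-1,m-1}$, and the inequality $p^* \neq \sigma(p)$ rules out $p = p'$ (which would force $\sigma(p)=p^*$). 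Hence rows $p$ and $p'$ of $A$ coincide, and restricting these rows to the columns indexed by $P$ shows that $A(P)$ has two equal rows, contradicting $\det A(P) \neq 0$.

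The only real obstacle is locating the companion index $p'$; once one realizes that the rows of $A$ are constant on the fibres of $i \mapsto \res{i-1,m-1}$, the preimage $\sigma^{-1}(p^*)$ is the natural candidate and the whole argument collapses to a few lines, using nothing beyond the Leibniz formula and the definition of $A$.
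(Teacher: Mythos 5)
Your proof is correct, but it takes a slightly different path than the paper. The paper argues directly from a row with two ones: if $A_{ij_1}=A_{ij_2}=1$ for $j_1,j_2\in P$, then $\part{j_1-1,1}=\res{i-1,m-1}=\part{j_2-1,1}$, and since every column $j$ of $A$ is determined by $\part{j-1,1}$, columns $j_1$ and $j_2$ of $A(P)$ coincide and the determinant vanishes; the symmetric case of two ones in a column is handled analogously. Thus the paper produces \emph{equal columns} from the two offending column indices that appear directly in the bad row, with no detour. You instead bring in the Leibniz expansion to extract a permutation $\sigma$, and then manufacture a companion row $p'=\sigma^{-1}(p^*)$ to produce \emph{equal rows} by matching $\res{\cdot,m-1}$. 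Both arguments rest on the same structural fact --- that $A_{ij}$ depends on $i$ only through $\res{i-1,m-1}$ and on $j$ only through $\part{j-1,1}$, so rows and columns are constant on the fibres of these two maps --- but the paper exploits column-constancy (immediate from the offending row) while you exploit row-constancy (which costs you the extra step of locating $p'$ via $\sigma$). Your route has the modest advantage that invoking $\sigma$ folds the ``every row/column has at least one $1$'' case into the argument automatically, whereas the paper leaves that part implicit, but the paper's argument is shorter and avoids Leibniz entirely.
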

\begin{proof} Assume that we choose $P$ such that one of the rows of $A(P)$ has two ones. In other words there are $i, j_1, j_2 \in P$ such that
\[ A_{ij_1} = A_{ij_2} = 1. \]
Using the definition of $A$ this implies that
\[ \part{j_1-1,1} = \res{i-1,m-1} = \part{j_2-1,1}. \]
Now let $k \in P$ be arbitrary. Then $A_{kj_1}=1$ if and only if $\res{k-1,m-1} = \part{j_1-1,1}$, which is true if and only if
\[ \res{k-1,m-1} = \part{j_2-1,1}, \]
so $A_{kj_1} = A_{kj_2}$ for all $k \in P$, so the $j_1$'th and $j_2$'th column are equal and so $\det A(P) = 0$. The proof is similar when we assume that there are two ones in one column.
\end{proof}

Recall that if $A(P)$ is a permutation, then $P = P_1 \cup \cdots \cup P_n$ where $\cap_i P_i = \emptyset$ and $A(P_i)$'s are all cycles. This motivates the following two theorems, where we characterize the subsets $P$ where $A(P)$ is a cycle. We are interested in the smallest elements of cycles, since the whole cycle are removed when we remove this element, which we will prove is exactly the numbers that are minimal.
\begin{defn} We say that an integer $n$ with $0 \leq n \leq q^m$ is $m$-minimal if
\[ A^{l(n)}_{n+1,n+1} = 1, \]
or equivalently using \reflemma{lem:powersofA} if
\[ \res{n,m-l(n)} = \part{n,l(n)}. \]
\end{defn}

\begin{thm} Let $P \subset \{1,2,\ldots,q^m\}$ be such that $A(P)$ is a $k$-cycle for some $1 \leq k \leq m$. Then $\min P-1$ is minimal with $l_m(\min P-1)=k$. \end{thm}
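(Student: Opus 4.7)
The plan is to translate both halves of ``$n := \min P - 1$ is $m$-minimal with $l_m(n) = k$'' into direct consequences of the $k$-cycle structure. I enumerate the cycle as $p_1 = n+1 \to p_2 \to \cdots \to p_k \to p_1$ and apply Lemma~\ref{lem:powersofA} to $A^j_{p_1, p_{j+1}} = 1$ (with $p_{k+1} = p_1$) to get, for $1 \leq j \leq k$,
\[
\res{n, m-j} = \part{p_{j+1}-1, j}.
\]
The case $j = k$ gives $\res{n, m-k} = \part{n, k}$, which yields $l_m(n) \leq k$ and is precisely the $m$-minimality relation once $l_m(n) = k$ is established. The remaining task is to show $\part{n, j} < \res{n, m-j}$ strictly for each $1 \leq j \leq k-1$.

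For the weak inequality, since $p_{j+1} \neq p_1$ we have $p_{j+1} - 1 > n$; if $\part{n, j} > \part{p_{j+1}-1, j}$ then
\[
n \geq q^j \part{n, j} \geq q^j\bigl(\part{p_{j+1}-1, j}+1\bigr) > q^j \part{p_{j+1}-1, j} + \res{p_{j+1}-1, j} = p_{j+1} - 1,
\]
a contradiction. So $\part{n, j} \leq \part{p_{j+1}-1, j} = \res{n, m-j}$, and the real issue is ruling out equality.

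The equality case is the main obstacle. Suppose $\part{n, j} = \res{n, m-j}$ for some $1 \leq j \leq k-1$. Writing $n = d_1 d_2 \cdots d_m$ in base $q$ (top digit first), this equality is equivalent to $d_l = d_{j+l}$ for $1 \leq l \leq m-j$; that is, $n$'s digit sequence has period $j$, and since $k \leq m$ this restricts to a period $j$ on the length-$k$ word $w := d_1 d_2 \cdots d_k$. On the other hand, the cycle closure makes every $p_{i+1} - 1$ the length-$m$ window starting at position $i+1$ of the infinite period-$k$ extension of $w$, so the minimality $p_1 = \min P$ amounts to $w$ being strictly less (lexicographically) than each of its $k-1$ non-identity cyclic rotations --- that is, $w$ is a Lyndon word. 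A Lyndon word of length $k$ cannot have any period $j < k$: comparing $w$ simultaneously with its cyclic rotations by $j$ and by $k-j$ positions, the assumed period $j$ forces agreement on the ``long'' side of each comparison, and the Lyndon strict inequalities at the first position of disagreement give contradictory strict comparisons between the very same pair of digits. This closes the argument. The subtle point is that a more direct attempt --- deriving a contradiction from $A^j_{n+1, n+1} = 1$ (from the equality, via Lemma~\ref{lem:powersofA}) clashing with $A^k_{n+1, n+1} = 1$ (from the cycle) --- does not work, because the high out-degree of $A$ allows $n+1$ to sit simultaneously on a length-$j$ closed walk in the full graph $A$ and on the length-$k$ cycle of the restricted $A(P)$. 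The contradiction has to come from the global minimality of $p_1$ inside $P$, that is, from the Lyndon property.
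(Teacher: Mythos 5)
Your proof is correct, and at the technical core it performs exactly the same comparison as the paper: to rule out equality $\part{n,j}=\res{n,m-j}$ for $j<k$, both arguments play off the cyclic shifts by $j$ and by $k-j$ against each other to produce incompatible strict inequalities. The paper does this directly in the part/residue calculus: it invokes Lemma~\ref{lem:ineqs} to propagate the residue inequality $\res{n,j}<\res{i_{j+1}-1,j}$ up to level $m-k+j$, feeds in $A^{k-j}_{i_{j+1}i_1}=1$ and $A^{k-j}_{i_1 i_{k-j+1}}=1$ from Lemma~\ref{lem:powersofA}, and concludes $i_{k-j+1}<i_1$, contradicting minimality. You instead translate everything into the digit sequence of $n$, observe that the cycle condition forces period $k$, that the hypothetical equality forces period $j$ on the length-$k$ word $w=d_1\cdots d_k$, and that $\min P = p_1$ is equivalent to $w$ being strictly smaller than all its nontrivial rotations, i.e.\ a Lyndon word; you then invoke the standard fact that Lyndon words are unbordered. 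The step where you compare the rotations by $j$ and by $k-j$ to prove unborderedness is, digit for digit, the same computation the paper carries out with parts, residues, and Lemma~\ref{lem:ineqs}. What your phrasing buys is conceptual clarity: it identifies the minimal element of a cycle with the Lyndon representative of the necklace, and reduces the minimality claim to a recognizable combinatorics-on-words fact, rather than an ad hoc chain of part/residue identities. Your aside explaining why the naive clash $A^j_{n+1,n+1}=1$ vs.\ $A^k_{n+1,n+1}=1$ is not a contradiction is also well taken and is a useful remark that the paper does not make explicit. One small presentational point: the unborderedness sketch is terse and the phrase ``agreement on the long side'' is imprecise (the forced agreement is always on the first $k-j$ positions of the rotation by $j$, regardless of whether $j<k/2$); spelling out the two displayed inequalities $w[k-j+1..k]<w[1..j]$ (from $w<\sigma^j(w)$) versus $w[1..j]\le w[k-j+1..k]$ (from $w<\sigma^{k-j}(w)$) would make the contradiction immediate.
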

\begin{proof} Let $P = \{i_1,i_2,\ldots,i_k\}$ be a $k$-cycle with $A_{i_1 i_{j+1}}^{j} = 1$ for $1 \leq j < k$ and $A_{i_1 i_1}^k = 1$. Without loss of generality we can assume that $\min P = i_1$. Using \reflemma{lem:powersofA} we get that
\[ \res{i_1-1,m-j} = \part{i_{j+1}-1,j}, \]
for $1 \leq j < k$ and
\[ \res{i_1-1,m-k} = \part{i_1-1,k} \]
so we need to prove that $\part{i_{j+1}-1, j} > \part{i_1-1, j}$ for $j=1,2,k-1$. We have the non-strict inequality since $i_1 < i_j$. So assume for contradiction that
\[ \part{i_1-1,j} = \part{i_{j+1}-1,j}. \]
Now since $i_1 < i_{j+1}$ we have
\[ \res{i_1-1,j} < \res{i_{j+1}-1,j}, \]
and due to \reflemma{lem:ineqs} we have
\begin{equation}\label{eq:ineq1} \res{i_1-1,m-k+j} < \res{i_{j+1}-1,m-k+j} \end{equation}
since $k \leq m$. Since $A^{k-j}_{i_{j+1} i_1} = 1$ we have $\res{i_{j+1}-1,m-k+j} = \part{i_1-1,k-j}$. Using \eqref{eq:ineq1} we get
\[ \res{i_1-1, m-k+j} < \part{i_1-1,k-j}. \]
Now consider $i_{k-j+1}$. Since $j < k$ we have $A^{k-j}_{i_1 i_{k-j+1}} = 1$ so 
\[ \res{i_1-1,m-k+j} = \part{i_{k-j+1}-1, k-j}, \]
and hence
\[ \part{i_{k-j+1}-1, k-j} < \part{i_1-1,k-j}. \]
This implies that $i_{k-j+1} < i_1$ which is a contradiction against $i_1$ being the least element in $P$.
\end{proof}

\begin{thm}\label{thm:uniquecycle} Assume that $i-1$ is minimal. Then
  there is a unique $P \subseteq \{1,2,\ldots,q^m\}$ such that $\min P
  = i$ and $A(P)$ is a $l(i-1)$-cycle. 
\end{thm}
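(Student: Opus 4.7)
The plan is to construct the unique $k$-cycle explicitly from the digits of $i - 1$, then show any other candidate is forced to coincide with it. Set $k = l(i-1)$ and write $i - 1 = \sum_{s=1}^m d_s q^{s-1}$ in base $q$. Since $i - 1$ is $m$-minimal, the equality $\res{i-1, m-k} = \part{i-1, k}$ reads $d_s = d_{s+k}$ for $1 \le s \le m-k$, so the finite digit string is consistent with period $k$ and extends uniquely to a $k$-periodic sequence $(d_s)_{s \in \Z}$.

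For existence, set $i_{r+1} - 1 = \sum_{s=1}^m d_{s-r} q^{s-1}$ for $r = 0, 1, \ldots, k-1$, so $i_1 = i$. A direct computation with \reflemma{lem:powersofA} gives $A_{i_r, i_{r+1}} = 1$ (indices mod $k$), so $A(P)$ is a $k$-cycle on $P = \{i_1, \ldots, i_k\}$. The $i_{r+1}$ are pairwise distinct, for an equality $i_{r+1} = i_{r'+1}$ with $0 \le r < r' < k$ would force $(d_s)$ to have period $r' - r$, giving $\res{i-1, m - (r'-r)} = \part{i-1, r' - r}$ and thus $l(i-1) \le r' - r < k$. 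To see $i_1 = \min P$, note that by the definition of $l$ one has $\part{i-1, r} < \res{i-1, m-r}$ strictly for $1 \le r < k$; in digit form this is the lex-strict comparison of $(d_m, \ldots, d_{r+1})$ with $(d_{m-r}, \ldots, d_1)$, which is exactly the most-significant segment of the digit comparison of $i_1 - 1$ against $i_{r+1} - 1$ (whose digits at position $s$ are $d_s$ and $d_{s-r}$), so $i_1 < i_{r+1}$.

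For uniqueness, let $P' = \{j_1, \ldots, j_k\}$ be any $k$-cycle with minimum $i$, indexed along the cycle so that $j_1 = i$. Then $A^r_{j_1, j_{r+1}} = 1$ and $A^{k-r}_{j_{r+1}, j_1} = 1$, and by \reflemma{lem:powersofA} these prescribe respectively the top $m - r$ digits and the bottom $m - k + r$ digits of $j_{r+1} - 1$, each one copied from $i - 1$. Because $k \le m$ these two ranges cover every position in $\{1, \ldots, m\}$, and periodicity of $(d_s)$ makes them agree on the overlap, so $j_{r+1} - 1$ is forced to equal $\sum_s d_{s-r} q^{s-1} = i_{r+1} - 1$ and $P' = P$. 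I expect this uniqueness step to be the most delicate point: the forward and return constraints on $j_{r+1} - 1$ must be shown to be jointly consistent and to determine every digit, and the period-$k$ extension of $(d_s)$ is exactly the mechanism that makes the two prescriptions compatible on their overlap, with the rest reducing to bookkeeping in $\part$ and $\res$.
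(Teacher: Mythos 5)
Your proposal is correct, and in fact it does not merely reformulate the paper's argument --- it quietly repairs a defect in it. Both proofs share the same overall strategy (an explicit digit-based construction of the $k$-cycle, a minimality argument to identify $i$ as the smallest element, and a uniqueness argument from the forward constraint $A^r_{j_1 j_{r+1}}=1$ and the return constraint $A^{k-r}_{j_{r+1} j_1}=1$ via Lemma~\ref{lem:powersofA}), but the paper's explicit recipe for the cycle elements is wrong in general, and yours is right. The paper sets $i_{n}-1 = q^{n-1}\res{i-1,m-n+1} + \part{i-1,m-n+1}$, i.e.\ it fills the low $n-1$ digit positions with the \emph{top} $n-1$ digits of $i-1$, whereas your construction $i_{r+1}-1 = \sum_{s=1}^m d_{s-r}q^{s-1}$ fills them using the $k$-periodic extension of the digit string of $i-1$, which amounts to $\res{\part{i-1,k-n+1},n-1}$. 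The two agree precisely when $k\mid m$. For $q=3$, $m=3$, $i-1=3$ ($k=2$, digits $d_1 d_2 d_3 = 0,1,0$), the paper's formula produces $i_2-1=9$, but $(A_3)_{10,4}=0$, so $\{4,10\}$ is not a $2$-cycle; the actual $2$-cycle with minimum $4$ is $\{4,11\}$, which your recipe gives. This gap also propagates into the paper's uniqueness chain, where the step $\res{\part{i-1,k-n+1},n-1} = \res[\big]{\res{i_n-1,m-k+n-1},n-1}$ presupposes that the constructed $i_n$ satisfies the return constraint, which the paper's $i_n$ does not in general. Your version verifies the full cycle condition $A_{i_r,i_{r+1}}=1$ (cyclically), handles distinctness and the strict inequality $i < i_{r+1}$ via lexicographic digit comparison, and shows in the uniqueness step that the forward and return constraints are jointly consistent exactly because of the period-$k$ relation $d_s = d_{s+k}$ coming from minimality. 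So the theorem is true, your proof establishes it, and it improves on the paper's own argument; it would be worth flagging the paper's formula for $i_n$ as an erratum.
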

\begin{proof} We let $P = \{i, i_2, i_3, \ldots, i_k\}$ where
\begin{align*}
i_2-1 &= q \res{i-1,m-1} + \part{i-1,m-1} \\
i_3-1 &= q^2 \res{i-1,m-2} + \part{i-1,m-2} \\
&
\mvdots \\
i_k-1 &= q^{k-1} \res{i-1,m-k+1} + \part{i-1,m-k+1}.
\end{align*}
We now need to prove that $A_{ii_n}^{n-1} = 1$ and that $i < i_n$ for all $n=2,3,\ldots,k$. Using the uniqueness of the part and residue we see that
\[ \part{i_n-1,n-1} = \res{i-1,m-n+1} \]
and
\[ \res{i_n-1,n-1} = \part{i-1,m-n+1} \]
for $n=2,3,\ldots,k$. The first of these equations implies that $A_{ii_n}^{n-1} = 1$.

Since $l_m(i-1) = k$ we know that
\[ \part{i-1,n} < \res{i-1,m-n} \]
for $n=1,2,\ldots,k-1$. This implies that
\[ i_{n+1}-1 = q^n \res{i-1,m-n} + \part{i-1,m-n} > q^n \part{i-1,n} + \res{i-1,n} = i-1 \]
since both $\part{i-1,m-n}$ and $\res{i-1,n}$ are smaller than $q^n$.

We now need to prove that this $P$ is unique. Assume that we have $P' = \{i,i_2', \ldots, i_k'\}$, where we order the elements such that $A_{ii_n'}^{n-1} = 1$. This implies that
\[ \res{i-1,m-n+1} = \part{i_n'-1,n-1} \]
for all $n=2,3,\ldots,k$. Since $A(P)$ is a $k$-cycle, we furthermore know that $A_{i_n'i}^{k-n+1} = 1$, so
\[ \res{i_n'-1,m-k+n-1} = \part{i-1, k-n+1}. \]
Now we want to prove that $i_n' = i_n$, so let $2 \leq n \leq k$ be given. We have
\[ i_n'-1 = q^{n-1} \part{i_n'-1,n-1} + \res{i_n'-1, n-1} \]
and $\part{i_n'-1,n-1} = \res{i-1,m-n+1}$, so we just need to prove that
\[ \res{i_n'-1,n-1} = \part{i-1,m-n+1}. \]
We have
\begin{align*} \res{i_n'-1,n-1}
&= \res[\big]{\res{i_n'-1,m-k+n-1}, n-1} \\
&= \res{\part{i-1,k-n+1}, n-1} \\
&= \res[\big]{\res{i_n-1,m-k+n-1}, n-1} \\
&= \res{i_n-1, n-1} \\
&= \part{i-1,m-n+1}
\end{align*}
so $i_n = i_n'$ for all $n$, and so $P = P'$.
\end{proof}

\begin{cor} If $l_m(i-1) = m$ then there is exactly one $P \subseteq \{1,2,\ldots,q^m\}$ such that $\min P = i$ and $A(P)$ is a $m$-cycle. \end{cor}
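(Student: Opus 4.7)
The corollary is essentially a specialization of Theorem~\ref{thm:uniquecycle}. The plan is to verify that the hypothesis $l_m(i-1) = m$ forces $i-1$ to be $m$-minimal, and then quote the theorem directly.

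First I would unpack the minimality condition. By definition, $i-1$ is $m$-minimal when $\res{i-1, m - l(i-1)} = \part{i-1, l(i-1)}$. Substituting $l(i-1) = m$, this reduces to the equation
\[ \res{i-1, 0} = \part{i-1, m}. \]
Both sides vanish identically: $\res{n, 0} = 0$ for every $n$, since residues modulo $q^0 = 1$ are zero, and $\part{i-1, m} = 0$ because $0 \leq i-1 < q^m$. Hence $i-1$ is automatically $m$-minimal whenever $l_m(i-1) = m$, and this is in fact the very same observation used earlier to argue that $l_m$ is well defined.

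With minimality in hand, Theorem~\ref{thm:uniquecycle} applies verbatim with $l(i-1) = m$: it produces a unique subset $P \subseteq \{1, 2, \ldots, q^m\}$ with $\min P = i$ such that $A(P)$ is a $l(i-1)$-cycle, which is precisely an $m$-cycle in this case. There is no genuine obstacle here; the only content is the bookkeeping that the boundary case $l(i-1) = m$ is not excluded from the hypothesis of Theorem~\ref{thm:uniquecycle}, and the short calculation above confirms that it is not.
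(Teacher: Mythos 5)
Your proof is correct and follows essentially the same route as the paper's: both verify that $l_m(i-1)=m$ forces $i-1$ to be $m$-minimal (the paper phrases this as $A^m_{ii}=1$ for all $i$, which by Lemma~\ref{lem:powersofA} is the same zero-equals-zero observation you make with $\res{i-1,0}=\part{i-1,m}$), and then invoke Theorem~\ref{thm:uniquecycle}.
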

\begin{proof} This follows from the fact that $A^m_{ij} = 1$ for all $i,j$. In particular we have $A^m_{ii} = 1$ for all $i$.\end{proof}

Now compare this corollary with the following lemma.
\begin{lemma}\label{lem:minimaldecrease} If $l_m(i-1)=m$, then $\overline{i}_m = \overline{i-1}_m+1$. \end{lemma}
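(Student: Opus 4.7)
The plan is to unpack the defining formula $\bar{n}_m = n - \res{n, m - l_m(n)}$ and reduce the lemma to a statement about residues of $i$. From $l_m(i-1) = m$ one reads off $\bar{i-1}_m = (i-1) - \res{i-1, 0} = i - 1$, so the desired equality $\bar{i}_m = \bar{i-1}_m + 1$ is equivalent to $\bar{i}_m = i$, i.e., to $\res{i, m - l_m(i)} = 0$. I will establish this via a base-$q$ digit analysis of how $i$ differs from $i - 1$.

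Write $i - 1 = d_m d_{m-1} \cdots d_1$ in base $q$, and let $t$ be the number of trailing $(q-1)$'s. Applying $l_m(i-1) = m$ with $j = m-1$ gives $d_m < d_1$, which rules out $t = m$ (else $d_m = d_1 = q - 1$); hence $0 \leq t \leq m - 1$ and $d_{t+1} \leq q - 2$. The case $m = 1$ is trivial, since $\bar{n}_1 = n$ for all $n$, so assume $m \geq 2$. Incrementing $i - 1$ by $1$ rolls over exactly the trailing $(q-1)$'s, so $i$ has $t$ trailing zeros, its $(t+1)$-th digit is $d_{t+1} + 1$, and all higher digits are unchanged.

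From this explicit description I will extract three elementary identities: (a) $\res{i, k} = 0$ whenever $k \leq t$; (b) $\res{i, m-j} = \res{i-1, m-j} + 1$ whenever $m - j \geq t + 1$; and (c) $\part{i, j} \leq \part{i-1, j} + 1$ for every $j$. These reduce the lemma to proving $l_m(i) \geq m - t$, for then $m - l_m(i) \leq t$ and (a) forces $\res{i, m - l_m(i)} = 0$. To see $l_m(i) \geq m - t$, it is enough to verify $\part{i, j} < \res{i, m - j}$ for each $j \in \{1, \ldots, m - t - 1\}$. For any such $j$ one has $m - j \geq t + 1$, so combining (b), (c), and the assumption $\part{i-1, j} < \res{i-1, m-j}$ (which holds because $l_m(i-1) = m$ and $1 \leq j \leq m - 1$) yields
\[ \part{i, j} \;\leq\; \part{i-1, j} + 1 \;\leq\; \res{i-1, m-j} \;=\; \res{i, m-j} - 1 \;<\; \res{i, m-j}, \]
finishing the argument.

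The only place demanding real care is the carry bookkeeping needed for identities (a)--(c): each is a short case split on whether the index falls within the trailing block of $(q-1)$'s of $i - 1$ or strictly above it. Once those three identities are stated cleanly, the remainder is a one-line chain of inequalities, and no appeal is needed to any machinery beyond the elementary properties of $\part$ and $\res$ developed in the preceding section.
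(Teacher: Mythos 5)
Your proof is correct and takes essentially the same approach as the paper: both reduce the lemma to showing $\res{i, m-l_m(i)}=0$ and both exploit the two elementary increment facts $\part{i,j} \leq \part{i-1,j}+1$ and $\res{i,m-j} = \res{i-1,m-j}+1$ (in the no-carry-out range) together with the strict defining inequalities coming from $l_m(i-1)=m$. The paper organizes this as a short contradiction argument without naming the trailing-digit count $t$, whereas you establish the lower bound $l_m(i) \geq m-t$ directly and then invoke the trailing zeros of $i$; the difference is one of presentation rather than substance.
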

\begin{proof}
It is enough to prove that $\bar i = i$, since we certainly have $\overline{i-1} = i-1$. Using the definition we see that this is equivalent to $\res{i,m-l(i)} = 0$. If $l(i) = m$ we are done, so assume that $l(i) < m$. Now either $\res{i,m-l(i)} = 0$, in which case we are done, or $\res{i,m-l(i)} = \res{i-1,m-l(i)}+1$. Now since $l(i-1) = m$ we have
\[ \res{i-1,m-l(i)} < \part{i-1,l(i)}, \]
since $l(i) < m = l(i-1)$, but
\[ \res{i-1,m-l(i)} = \res{i,m-l(i)} - 1 \leq \part{i,l(i)} - 1 \leq \part{i-1,m-l(i)}, \]
which is a contradiction.
\end{proof}
Recalling the idea of the proof we here see that if $l_m(i-1) = m$ and
we remove the $i$'th row and column of $A_m$, then we remove exactly
one permutation of size $\leq m$, namely an $m$-cycle, which increases
the $m$'th coefficient of the characteristic polynomial by one, and we
also see that it increases the $m$'th digit of the base $q$ expansion
of $\bar i$ by one.

\section{Induction mapping}
In the following chapter we will no longer suppress the dependency on $m$, since we are interested in mapping permutations between matrices of different sizes while preserving cycles. We will illustrate the idea with an example. If $q=3$, and we write all numbers in base $3$ we see that
\begin{equation}\label{eq:updownex} 012,120,201 \end{equation}
is a $3$-cycle in $A_3(012)$. We now map this up to
\[ 0120,1201,2012 \]
which is a $3$-cycle in $A_4(0120)$. On the other hand we could also map \eqref{eq:updownex} down to
\[ 01,12,20 \]
which is a $3$-permutation in $A_2(01)$. In this section we will formally define these maps, and also prove that they map cycles to cycles. We begin with the `down' map which is defined in the following way.
\begin{defn} For an integer $i$ with $0 \leq i < q^{m+1}$ we define
\[ D_m(i) = \part{i,1}. \]
If $M > m$ and $0 \leq i \leq q^M$ we let
\[ D_{m,M}(i) = D_m \circ \cdots \circ D_{M-1}(i) = \part{i,M-m}. \]
\end{defn}

We now prove the following lemma.
\begin{lemma} If we for integers $i,M$ have $l_M(i) = m < M$, then
\[ l_m(D_{m,M}(i)) = m. \]
\end{lemma}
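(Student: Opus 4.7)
The plan is to translate the goal $l_m(\part{i,M-m}) = m$ into a family of strict inequalities that can be read off from the hypothesis $l_M(i) = m$. Setting $n = \part{i,M-m}$, two applications of the basic proposition on part and residue give $\part{n,j} = \part{i,M-m+j}$ and, via part 2 with $M-j > M-m$, $\res{n,m-j} = \part{\res{i,M-j},M-m}$. The case $j=m$ is automatic since $\part{n,m} = 0 = \res{n,0}$, so the lemma reduces to showing the strict inequality
\[
\part{i,M-m+j} < \part{\res{i,M-j},M-m}
\]
for each $j$ with $1 \leq j < m$.

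The main step decomposes the $l_M(i)=m$ inequality $\part{i,j} < \res{i,M-j}$ (which holds because $j < m$) by the divisor $q^{M-m}$. Writing
\[
\part{i,j} = q^{M-m}\part{i,M-m+j} + \res{\part{i,j},M-m}
\]
and
\[
\res{i,M-j} = q^{M-m}\part{\res{i,M-j},M-m} + \res{i,M-m},
\]
and noting that each residue term is strictly less than $q^{M-m}$, one immediately gets $\part{i,M-m+j} \leq \part{\res{i,M-j},M-m}$; strict inequality here gives the desired conclusion. The only way equality can occur in the top parts is if additionally $\res{\part{i,j},M-m} < \res{i,M-m}$.

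The real obstacle is ruling out this equality case, and this is where one has to bring in a second $l_M$-inequality. I would invoke $l_M(i)=m$ again at index $m-j$, giving $\part{i,m-j} < \res{i,M-m+j}$, and decompose by $q^j$ instead of $q^{M-m}$: the quotients are $\part{i,m}$ on the left and, by part 2 of the proposition again, $\part{\res{i,M-m+j},j} = \res{\part{i,j},M-m}$ on the right. Combining $\part{i,m} \geq \res{i,M-m}$ (the $l_M(i)=m$ condition at index $m$) with $\res{i,M-m} > \res{\part{i,j},M-m}$ derived above yields $\part{i,m} \geq \res{\part{i,j},M-m} + 1$; plugging this into the $q^j$-decomposition, together with the bound $\res{i,j} < q^j$ on the remaining residue, forces $\part{i,m-j} > \res{i,M-m+j}$, which contradicts the $l_M$-inequality at $m-j$. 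Hence the strict inequality must hold for every $1 \leq j < m$, so $l_m(n) = m$.
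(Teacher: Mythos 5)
Your proof is correct, and it is genuinely more careful than the paper's own one-line argument. The paper's proof observes that $\res{i,m-j} < \res{i,M-j}$ and then invokes the $l_M(i)=m$ inequalities; but as printed, those hypotheses appear with the inequality signs reversed relative to the definition of $l_M$ (the definition gives $\part{i,j} < \res{i,M-j}$ for $j < m$ and $\part{i,m}\geq\res{i,M-m}$, while the proof writes the opposite), and more to the point the argument never confronts the fact that dividing the strict inequality $\part{i,j} < \res{i,M-j}$ by $q^{M-m}$ only yields a \emph{non-strict} inequality between the resulting quotients $\part{n,j}=\part{i,M-m+j}$ and $\res{n,m-j}=\part[\big]{\res{i,M-j},M-m}$. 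You pin down exactly this gap: equality in the quotients can only happen if $\res[\big]{\part{i,j},M-m} < \res{i,M-m}$, and you close it off by a second application of $l_M(i)=m$ at the complementary index $m-j$, decomposing $\part{i,m-j}<\res{i,M-m+j}$ by $q^j$ and using $\part{i,m}\geq\res{i,M-m}>\res[\big]{\part{i,j},M-m}$ to force $\res[\big]{\part{i,m-j},j}<0$, a contradiction. I have checked the identities you invoke (in particular $\part[\big]{\res{i,M-m+j},j}=\res[\big]{\part{i,j},M-m}$ via part~2 of the proposition) and each step is valid; the two-index argument you give is a real strengthening of, not merely an elaboration on, the proof as it stands in the paper.
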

\begin{proof} We have $\res{i, M-m} \geq \part{i,m}$ and $\res{i,M-j} < \part{i,j}$ for all $1 \leq j < m$, and we need to prove that $\res{i,m-j} < \part{i,j}$ for all $1 \leq j \leq m$. But this is clearly the case since $m < M$, so
\[ \res{i,m-j} < \res{i,M-j} < \part{i,j} \]
for all $1 \leq j < m$.
\end{proof}

\begin{cor}\label{cor:recmin} Let $i$ be an integer with $0 \leq i < q^M$. If $l_M(i) = m < M$, then
\[ \bar i_M = q^{M-m} \overline{D_{m,M}(i)}_m. \]
\end{cor}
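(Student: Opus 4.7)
The plan is to unfold the definition of the minimal prefix on both sides of the claimed equality and invoke the preceding lemma to align the two expressions.

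First, I would observe that by the preceding lemma we have $l_m(D_{m,M}(i)) = m$, so both sides of the identity involve minimal-prefix computations at the ``top'' length, where the formula simplifies considerably. On the left, the hypothesis $l_M(i) = m$ lets me use the second form of the minimal prefix directly to write
\[ \bar{i}_M = q^{M - l_M(i)} \part{i, M - l_M(i)} = q^{M-m} \part{i, M-m} = q^{M-m} D_{m,M}(i), \]
the last equality being just the definition of $D_{m,M}$.

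Next, I would handle the right-hand side. Setting $j = D_{m,M}(i)$, the previous lemma gives $l_m(j) = m$, so $m - l_m(j) = 0$. Using $\res{j,0} = 0$ (equivalently $\part{j,0} = j$), the definition of the minimal prefix yields
\[ \bar{j}_m = q^{m - l_m(j)} \part{j, m - l_m(j)} = q^{0} \part{j,0} = j = D_{m,M}(i). \]
Multiplying by $q^{M-m}$ and comparing with the expression for $\bar{i}_M$ obtained above completes the proof.

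There is essentially no serious obstacle here; the corollary is really just a bookkeeping consequence of the preceding lemma together with the base case $\bar{j}_m = j$ whenever $l_m(j) = m$. The only thing one must be careful with is that the formula $\bar{n}_m = q^{m-l_m(n)}\part{n,m-l_m(n)}$ behaves correctly when $l_m(n) = m$, which follows from the convention $\part{n,0} = n$ noted after the definition of part and residue.
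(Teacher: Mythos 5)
Your proof is correct and is essentially the paper's own argument made explicit: the paper simply states that the corollary ``follows from the definition of the minimal prefix,'' and your unfolding of both sides of the identity (using $l_M(i)=m$ on the left and $l_m(D_{m,M}(i))=m$ with $\part{j,0}=j$ on the right) is exactly the bookkeeping being implicitly invoked.
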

\begin{proof}
This follows from the definition of the minimal prefix.
\end{proof}

We saw earlier that the characteristic polynomial of a matrix can be found by considering the trace of the powers of the matrix. So if we can map permutations bijectively between two transition matrices we must have the same characteristic polynomials. As before we only need to consider cycles as all permutations are products of cycles.
\begin{defn} An ordered $k$-tuple of distinct elements, $(i_1,\ldots,i_k)$ with $0 \leq i_j \leq q^m$ for all $j=1,2,\ldots,k$ is a $k$-cycle in $A_m(c)$ if $A_m(c)_{i_j, i_{j+1}} = 1$ for all $j=1,2,\ldots,k-1$, and $A_m(c)_{i_k, i_1} = 1$. In other words, if we have
\[ \res{i_j,m-1} = \part{i_j,1} \]
for $j=1,2,\ldots,k-1$ and $\res{i_k,m-1} = \part{i_1,1}$ and $i_j \geq c$ for all $j=1,2,\ldots,k$.
\end{defn}

We have a `down' map, mapping from large matrices to smaller and we now define an `up' map, mapping from smaller to larger.
\begin{defn} Let $P = (i_1,\ldots,i_k)$ be a $k$-cycle in $A_m(c)$. Then we let
\[ U_m(P) = (qi_1 + \res{i_2,1}, \cdots, qi_k + \res{i_1,1}), \]
and for $M > m$ we let $U_{m,M} = U_{M-1} \circ U_{M-2} \circ \cdots \circ U_m$.
\end{defn}

\begin{lemma} Let $m = l_M(c)$ and let $P = (i_1,i_2,\ldots,i_k)$ be a $k$-cycle in $A_M(c)$. Then
\[ D_{m,M}(P) = (D_{m,M}(i_1), \cdots, D_{m,M}(i_k)) \]
is a $k$-cycle in $A_m(D_{m,M}(c))$. Furthermore, if $Q = (j_1,\ldots,j_k)$ is a $k$-cycle in $A_m(D_{m,M}(c))$, then $U_{m,M}(Q)$ is a $k$-cycle in $A_M(c)$.
\end{lemma}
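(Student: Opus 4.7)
The plan is to verify, for each direction, the three defining properties of a cycle: the shift-edge condition, distinctness of the entries, and the threshold bound.

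For the downward direction, the edge identity $\res{D_{m,M}(i_j), m-1} = \part{D_{m,M}(i_{j+1}), 1}$ unfolds directly from the part/residue identities of the first proposition:
\[ \res{\part{i_j, M-m}, m-1} = \part{\res{i_j, M-1}, M-m} = \part{\part{i_{j+1}, 1}, M-m} = \part{\part{i_{j+1}, M-m}, 1}, \]
applying part 2 of the proposition in the first equality (valid for $m \geq 2$; for $m=1$ both sides vanish), the $A_M$-cycle edge of $P$ in the second, and part 1 in the third. The threshold $D_{m,M}(i_j) \geq D_{m,M}(c)$ follows from $i_j \geq c$ and the monotonicity of $\part{\cdot, M-m}$. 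The main work is distinctness: if $D_{m,M}(i_j) = D_{m,M}(i_{j'})$ for some $j < j'$, then $i_j$ and $i_{j'}$ share their first $m$ base-$q$ digits, and the cyclic shift relation translates this into a local period $r = j' - j$ on the first $m+r$ digits of the cycle's underlying periodic digit sequence. Combining this partial periodicity with the hypothesis $l_M(c) = m$ --- which forbids $c$ from having a prefix of length less than $m$ that meets or exceeds its complementary suffix --- produces a cyclic rotation whose $M$-digit window lies strictly below $c$, contradicting $i_l \geq c$ for every $l$.

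For the upward direction, the edge condition for $U_m(Q)$ at a single step is checked by expanding
\[ \res{q j_l + \res{j_{l+1},1}, m} = q \res{j_l, m-1} + \res{j_{l+1}, 1} = q \part{j_{l+1}, 1} + \res{j_{l+1}, 1} = j_{l+1}, \]
using the $A_m$-edge $\res{j_l, m-1} = \part{j_{l+1}, 1}$, and observing that $\part{q j_{l+1} + \res{j_{l+2}, 1}, 1} = j_{l+1}$ since $\res{j_{l+2},1} < q$. Iterating this step $M-m$ times yields the $A_M$-edge condition for $U_{m,M}(Q)$. Distinctness of $U_m$-images is immediate from the formula: from $q j_l + \res{j_{l+1},1} = q j_{l'} + \res{j_{l'+1},1}$ one reads off the last digit modulo $q$ and then $j_l$ by division. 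The threshold $U_{m,M}(j_l) \geq c$ is the delicate step: if $j_l > D_{m,M}(c)$, then the first $m$ digits of $U_{m,M}(j_l)$ strictly exceed those of $c$, forcing $U_{m,M}(j_l) > c$; if $j_l = D_{m,M}(c)$, the first $m$ digits of $U_{m,M}(j_l)$ agree with those of $c$, and the inequality $\part{c, m} \geq \res{c, M-m}$ from $l_M(c) = m$, combined with the appended digits being last digits of cycle elements all bounded below by $D_{m,M}(c)$, ensures the appended suffix is at least $\res{c, M-m}$.

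The main obstacle is the dual invocation of $l_M(c) = m$: once to rule out digit collapses in the down direction, and once to preserve the threshold in the up direction. Both arguments trace how the cycle's periodic digit sequence interacts with the minimal-prefix structure of $c$, and both reduce to inequality chasing on base-$q$ digits. As a consistency check, $D_{m,M}$ and $U_{m,M}$ are mutually inverse on the cycle level by a direct digit calculation, though this inverse relation alone does not collapse one half of the lemma into the other, since the threshold conditions on the $A_M$- and $A_m$-sides are intrinsically different.
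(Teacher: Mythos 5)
Your overall structure (verify edge, distinctness, and threshold conditions for each direction) is reasonable, and the edge computations and the easy half of the threshold check match the "straightforward calculations" the paper alludes to. But the paper's proof of the one genuinely hard claim---that every element of $U_{m,M}(Q)$ is at least $c$---goes through the earlier machinery of minimality, whereas you attempt a direct digit argument, and that argument has a gap.

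Specifically, the paper notes that $U_{m,M}(Q)$ is a priori a cycle in $A_M(q^{M-m}\part{c,M-m}) = A_M(\bar c_M)$, then reduces the threshold to showing that no integer $n$ with $\bar c_M \le n < c$ can be $M$-minimal, invoking the earlier theorem that the least element of a cycle in $A_M$ is always minimal. Your argument instead claims that when $j_l = D_{m,M}(c)$, the suffix appended by $U_{m,M}$, being composed of "last digits of cycle elements all bounded below by $D_{m,M}(c)$," must be at least $\res{c,M-m}$. This inference does not follow: the last digit of an element $\ge D_{m,M}(c)$ can be anything from $0$ to $q-1$, so the bound on the elements gives no control over the individual appended digits, and hence none over the resulting $(M-m)$-digit suffix. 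What is actually needed is an interplay between the full periodic digit sequence of the cycle (in particular that $j_l$ is the minimum $m$-window) and the inequalities $\part{c,m} \ge \res{c,M-m}$ and $\part{c,j} < \res{c,M-j}$ for $j < m$; that is precisely what the minimality theorem packages up, and the paper leans on it rather than re-deriving it by hand. Similarly, your distinctness argument in the downward direction gestures at a periodicity contradiction "producing a cyclic rotation whose $M$-digit window lies strictly below $c$" without making that precise; the step from a partial period of length $m$ to a genuine collision of $M$-windows does require the threshold hypothesis, and as written it is not carried through.

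So the proposal is not a variant proof of the same statement; it identifies the right danger points but replaces the paper's minimality-based resolution of the threshold with an incorrect digit bound, and leaves the distinctness claim at the level of a sketch that would need the same missing lemma to complete.
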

\begin{proof} To prove that $D_{m,M}(P)$ is a $k$-cycle in $A_m(D_{m,M}(c))$ can be done by straightforward calculations. We also get that $U_{m,M}(Q)$ is a $k$-cycle in $A_M(q^{M-m} \part{c, M-m})$ rather straightforward. The problem is to prove that it actually is a $k$-cycle in $A_M(c)$, or in other words that there are no $k$-cycles with their smallest element in the interval between $q^{M-m} \part{c,M-m}$ and $c$. Recalling the definition of $\bar c_M$ and that the least element of a cycle always is minimal we thus need to prove that if we have $\bar c_M \leq n < c$, then $n$ cannot be minimal.

We get that $\bar n_M = \bar c_M$ and $l_M(n) = l_M(c)$ so
\[ \res{c,M-m} - \res{n,M-m} = c-n \]
so if we assume that $n$ is minimal we get
\[ \part{c,m}  \geq \res{c,M-m} = \res{n,M-m} + c-n = \part{n,m} + c-n \]
which is a contradiction. This finishes the proof of the theorem.
\end{proof}

These two lemmas now lead to the following theorem regarding the invariance of the traces.
\begin{thm}\label{thm:powertrace} Let $m,k \leq M$. Then
\[ \trace A_m(c)^k = \trace A_M(q^{M-m} c)^k. \]
More generally we have
\[ \trace A_m(\part{c,M-m})^k = \trace A_M(c)^k \]
whenever $l_M(c) \geq m$.
\end{thm}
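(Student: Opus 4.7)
To prove both statements I work at the level of closed walks in the transition graph, using the fact that $\trace A^k$ counts closed walks of length $k$ for any nonnegative integer matrix $A$.  By \reflemma{lem:powersofA}, a closed walk $(i_0, i_1, \ldots, i_{k-1})$ in $A_m$ is completely determined by the $k$-periodic sequence $\beta = (\beta_t)_{t \in \Z/k\Z}$ of leading base-$q$ digits of $i_t-1$, and the walk is reconstructed via
\[
 i_t - 1 \;=\; \sum_{u=0}^{m-1} \beta_{t+u} \, q^{m-1-u}.
\]
Consequently $\trace A_m(c)^k$ equals the number of $k$-periodic digit sequences $\beta$ for which the integer $i_t^{(m)}(\beta) = 1 + \sum_{u=0}^{m-1} \beta_{t+u} q^{m-1-u}$ exceeds $c$ for every $t$, and the same description applies verbatim to each of the other matrices appearing in the theorem.

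For the first equation I use the same sequence $\beta$ to build both $i_t^{(m)}(\beta)$ and the analogous integer $I_t^{(M)}(\beta) = 1 + \sum_{u=0}^{M-1} \beta_{t+u} q^{M-1-u}$ for the larger graph; these are linked by
\[
 I_t^{(M)}(\beta) - 1 \;=\; q^{M-m} \bigl( i_t^{(m)}(\beta) - 1 \bigr) + R_t, \qquad 0 \leq R_t < q^{M-m},
\]
where $R_t$ collects the bottom $M-m$ digits.  A short case check on whether $i_t^{(m)}(\beta) \leq c$ or $i_t^{(m)}(\beta) > c$ then yields $I_t^{(M)}(\beta) > q^{M-m} c$ if and only if $i_t^{(m)}(\beta) > c$, so the admissible digit sequences are in canonical bijection and the two traces agree.

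For the second equation I appeal to the up/down correspondence proved in the lemma immediately preceding the theorem.  Every closed walk of length $k$ has a unique minimal period $d \mid k$ and is obtained by traversing a primitive ordered $d$-cycle exactly $k/d$ times, which gives the standard decomposition
\[
 \trace A^k \;=\; \sum_{d \mid k} \#\{\text{ordered } d\text{-cycles in } A\}.
\]
Under the stated hypothesis on $l_M(c)$, the preceding lemma produces, for every $d$, a bijection via $D_{m,M}$ and $U_{m,M}$ between ordered $d$-cycles in $A_M(c)$ and ordered $d$-cycles in $A_m(\part{c,M-m})$; summing these bijections over all divisors $d \mid k$ yields the desired equality of traces.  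The main technical hurdle is this second part, where one must verify that the preceding lemma's bijection applies uniformly at every relevant cycle length and keep careful track of the walk-to-cycle decomposition; the first equation, in contrast, reduces to the displayed digit identity and is essentially mechanical.
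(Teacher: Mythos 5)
Your argument for the first equation is correct and, if anything, cleaner and more self-contained than the paper's one-line proof: you reduce the trace count to a count of $k$-periodic digit sequences and observe that admissibility for $A_m(c)$ is equivalent to admissibility for $A_M(q^{M-m}c)$ by a direct digit-shift comparison, which handles closed walks directly without invoking the cycle lemma at all. That is a genuinely different (and more elementary) route than the paper's, which bundles both equations into a single appeal to the $D_{m,M}$/$U_{m,M}$ bijection on $k$-cycles.

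The second part, however, has a genuine gap. You invoke the decomposition $\trace A^k = \sum_{d \mid k} \#\{\text{ordered }d\text{-cycles}\}$ on the grounds that every closed walk of length $k$ is a primitive ordered $d$-cycle (distinct vertices, per the paper's definition) traversed $k/d$ times. This is false: a closed walk can be primitive (minimal period equal to $k$) and still revisit a vertex. For instance with $q=2$, $m=2$, take the $4$-periodic digit sequence $\beta = (0,0,0,1)$; its length-$4$ closed walk is $1 \to 1 \to 2 \to 3 \to 1$, which has minimal period $4$ but repeats the vertex $1$. Such walks contribute to $\trace A^k$ yet are counted by no sum over ordered $d$-cycles, so a bijection of $d$-cycles alone does not force equality of traces. (The paper's own one-sentence proof leans on the same identification and is no more careful, but it can be read as implicitly extending $D_{m,M}$ and $U_{m,M}$ to closed walks; your version makes the false decomposition explicit.) The clean repair is to run your digit-sequence argument for the second equation too: show that for a $k$-periodic $\beta$, every $M$-window exceeds $c$ if and only if every $m$-window exceeds $\part{c, M-m}$, under the hypothesis $l_M(c) \geq m$. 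That equivalence is exactly where the minimality reasoning from the preceding lemma reenters, now applied uniformly to all closed walks rather than only to cycles with distinct vertices.
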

\begin{proof} Each $k$-cycle contributes to the trace, and since the maps used in the lemmas map all $k$-cycles injectively, we get the theorem.
\end{proof}

Newton's formula for the characteristic polynomial gives us, that if 
\[ f_i^m(x) = x^n - a_1 x^{n-1} - \cdots - a_n = \det(xI - A_m(i)) \]
is the characteristic polynomial of $A_m(k)$ where $n = q^m - i$, then
\[ a_j = \frac{1}{j}
\big( \trace A_m(i)^j - a_1 \trace A_m(i)^{j-1} - \cdots - 
a_{j-1} \trace A_m(i) \big) \]
so the above theorem gives us that
\[ f_i^M(x) = x^{M-m} f_{q^{M-m}i}^m(x). \]
Combining this with the simple lemma below gives us the proof of the main theorem.
\begin{lemma} Let $n$ be an integer with $0 \leq n < q^m$. Then
\[ q \bar{n}_m = \bar{qn}_{m+1}. \]
\end{lemma}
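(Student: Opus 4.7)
The plan is to reduce the equality $q\bar{n}_m = \bar{qn}_{m+1}$ to the single auxiliary claim $l_{m+1}(qn) = l_m(n)$ and then evaluate both sides using the explicit formula $\bar{n}_m = q^{m - l_m(n)}\part{n, m - l_m(n)}$. The case $n = 0$ is trivial since both sides vanish, so I assume $n \geq 1$.

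The first step is to record how part and residue transform under multiplication by $q$. Since multiplying by $q$ appends a zero to the base-$q$ expansion, one has
\[ \part{qn, j} = \part{n, j-1} \quad \text{and} \quad \res{qn, j} = q\res{n, j-1} \]
for every $j \geq 1$. Both identities follow from the uniqueness of part and residue applied to the decomposition $qn = q^{j}\part{n, j-1} + q\res{n, j-1}$, using $q\res{n, j-1} < q^{j}$.

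The core of the argument is then to verify $l_{m+1}(qn) = l_m(n)$. Let $l = l_m(n)$ and translate the condition $\part{qn, j} \geq \res{qn, m+1-j}$ via the identities above: for $1 \leq j \leq m$ it becomes $\part{n, j-1} \geq q\res{n, m-j}$, and at $j = m+1$ it is trivially $0 \geq 0$. Writing $\part{n, j-1} = q\part{n, j} + n_j$, where $0 \leq n_j < q$ is the $j$-th base-$q$ digit of $n$, I would handle two cases. At $j = l$ the defining inequality $\part{n, l} \geq \res{n, m-l}$ gives $q\part{n, l} + n_l \geq q\res{n, m-l}$ at once. For $1 \leq j < l$ the strict inequality $\part{n, j} < \res{n, m-j}$ combined with $n_j \leq q-1$ gives $\part{n, j-1} \leq q\part{n, j} + q - 1 < q\res{n, m-j}$, so the condition fails. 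Hence the minimum is attained at $j = l$.

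Putting the pieces together,
\[ \bar{qn}_{m+1} = q^{m+1-l}\part{qn, m+1-l} = q \cdot q^{m-l}\part{n, m-l} = q\bar{n}_m. \]
The only place where any care is needed is the pair of inequalities at $j = l$ and at $j < l$; once the appending-a-zero identities are in place, both boil down to the elementary observation that $n_j \leq q-1$ allows the $+n_j$ term to be absorbed. Everything else is mechanical use of the part-and-residue identities established in Section 3.
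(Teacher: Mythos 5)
Your proof is correct and follows essentially the same strategy as the paper: both reduce the lemma to the claim $l_{m+1}(qn)=l_m(n)$ and then verify it using the digit-shifting identities $\part{qn,j}=\part{n,j-1}$ and $\res{qn,j}=q\res{n,j-1}$. Your two-case argument (absorbing the digit $n_j\leq q-1$) is in fact a cleaner rendering of the paper's somewhat garbled contradiction argument for the case $j<l_m(n)$.
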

\begin{proof}
We see that
\[ q \bar{n}_m = q( n - \res{n, m-l_m(n)}) = qn - \res{qn, m+1-l_m(n)}, \]
so we just need to prove that $l_{m+1}(qn) = l_m(n)$. Assume that $j = l_m(n)$. Then
\[ \part{qn, j} \geq q 
\part[\big]{\part{qn,j},1} = q \part{qn,j+1} = q 
\part{n,j} \geq q \res{n,m-j} = \res{qn,m}. \] Now assume that
$\part{qn,j} \geq \res{qn,m+1-j}$ for some $j > l_m(n)$. Then
\[ q\res{n,j} = \res{qn,j} \leq \part{qn,m+1-j} \]
so
\[ \res{n,j} \leq \part{\part{qn,m+1-j},1} = \part{n,m-j} \]
which is a contradiction.
\end{proof}

We are now ready to prove the main theorem.
\begin{proof}[Proof of \refthm{thm:char}] We prove this theorem using induction. If $m=1$ it is certainly true since $\bar i_1 = i$ for all $i$ with $0 \leq i < q$ and $A_1$ is the all one matrix of size $q \times q$.

We see that when choosing $m$ and $i > 0$ we have two possibilities: either we have $l(i-1) = m$ or $l(i-1) < m$. In the first case removing the $i$'th column and row only removes one non-zero minor, namely the unique $m$-cycle with $i$ as its minimal element given in \refthm{thm:uniquecycle}. In this case we also have that the last digit of $\overline{i-1}_m$ is $\res{i-1,1}$ which must be non-zero, so here we just decrease $a_m$ with $1$, so the first $m$ coefficients of the characteristic polynomial changes in the right way due to \reflemma{lem:minimaldecrease}. 

If we have $l(i-1) = n < m$ we see that we can find the characteristic polynomial of the smaller matrix of size $q^n$ instead and multiply it by $x^{m-n}$. As we see in \refcor{cor:recmin} this is also the case for $\bar{k}$. So by induction we are done.

Now we need to prove that the remaining coefficients are all zero. To
prove this we once again use \reflemma{thm:powertrace} to see that the
$M$'th coefficient of $f_k^m$ must be equal to the $M$'th coefficient
of $f_{q^{M-m} c}^M$ for any $M > m$. And here we see that the
$m+1$'th, $m+2$'th, \dots, and $M$'th coefficient all are zero,
since the $M$'th digit of the base $q$ expansion of
\[ q^M - \bar{q^{M-m} c}_M = q^{M-m}(q^m - \bar{c}_m) \]
is zero. This finishes the proof of the theorem.
\end{proof}

\section{Constant dimension}
Now define $\phi: c \mapsto \dim_H F(c)$. Recall from \eqref{eq:hausd} that when $c$ has finite base $q$ expansion we can calculate $\phi(c)$. Nilsson \cite{lit:nilsson} proved that this function is continuous and constant almost everywhere. Using the theorem we see that if we have $0 \leq i < j < q^m$ such that $\bar i_m = \bar j_m$ then
\[ \phi\left( \frac{i}{q^m} \right) = \phi\left( \frac{j}{q^m} \right) \]
and since $\phi$ is a decreasing function it must be constant on the interval
\[ \left[\frac{i}{q^m}, \frac{j}{q^m}\right]. \]
Now let $0 \leq i < q$ be given and let 
\[ j(m) = \sum_{n=1}^m i q^{n-1}. \]
We now claim that
\[ \overline{q^{m-1} i}_m = \overline{j(m)}_m. \]
To prove this we see that $l_m(q^{m-1} i) = 1$ and so $\overline{q^{m-1} i}_m = q^{m-1} i$. Now $l_m(j(m)) = 1$ and
\[ \overline{j(m)}_m = i q^{m-1} \]
which proves the claim. This gives us
\[ \phi\left(\frac{i}{q}\right) = \phi\left( \frac{j(m)}{q^m} \right) \]
for all $m$ and letting $m \to \infty$ we get that $\phi$ is constant on the interval
\[ \left[ \frac{i}{q}, \frac{i}{q-1} \right]. \]
Now letting $m=1$ we find
\[ g_i^1(x) = x - \bar i_1 = x - i \]
which has one root, $x=i$, so we get
\[ \phi\left(\frac{i}{q}\right) = \frac{\log i}{\log q} \]
on this interval.

A bit more work allows us to calculate $\phi(x)$ for $x = \frac{i}{q^n}$ for larger $n$ since we here need to solve polynomial equations of degree $n$.

\section{Numerical plot}
Calculating the spectral radii of $A(k)$, we can make numerical plots
of the function~$\phi$. The plot in figure \ref{fig:2357} was made
using GNU Octave.
\begin{figure}[!htp]
\centering
\includegraphics[scale=1.0]{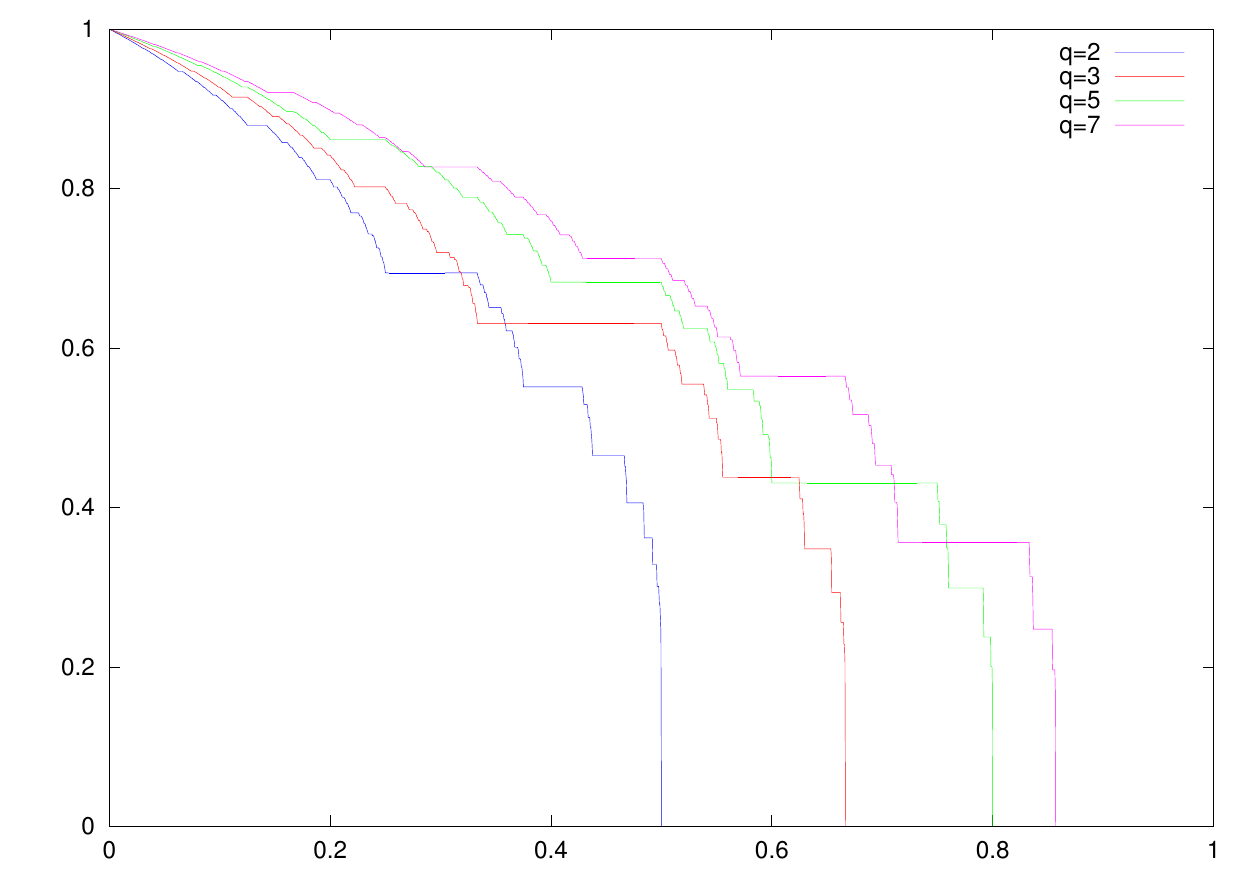}
\caption{Numerical plots of $\phi$ for $q \in \{2,3,5,7\}$.}
\label{fig:2357}
\end{figure}

\section{Asymptotics}
We now want to consider $\phi$ as $q \to \infty$. We consider the function $\psi: [0,1) \to [0,1)$ where
\[ \psi(c) = \left\{ \begin{array}{ll} 
1+\frac{\log(1-c)}{\log q} & 0 \leq c < \frac{q-1}{q} \\
0 & \text{otherwise.}
\end{array}\right. \]
and wish to prove that $\phi$ and $\psi$ are somewhat asymptotically similar. This can also be expressed by saying that $\rho(A_c)$ behaves somewhat like $q-qc$, which is true in the starting point of the intervals where $\phi$ is constant, so we get the following theorem.

\begin{figure}[!htp]
\centering
\includegraphics[scale=1.0]{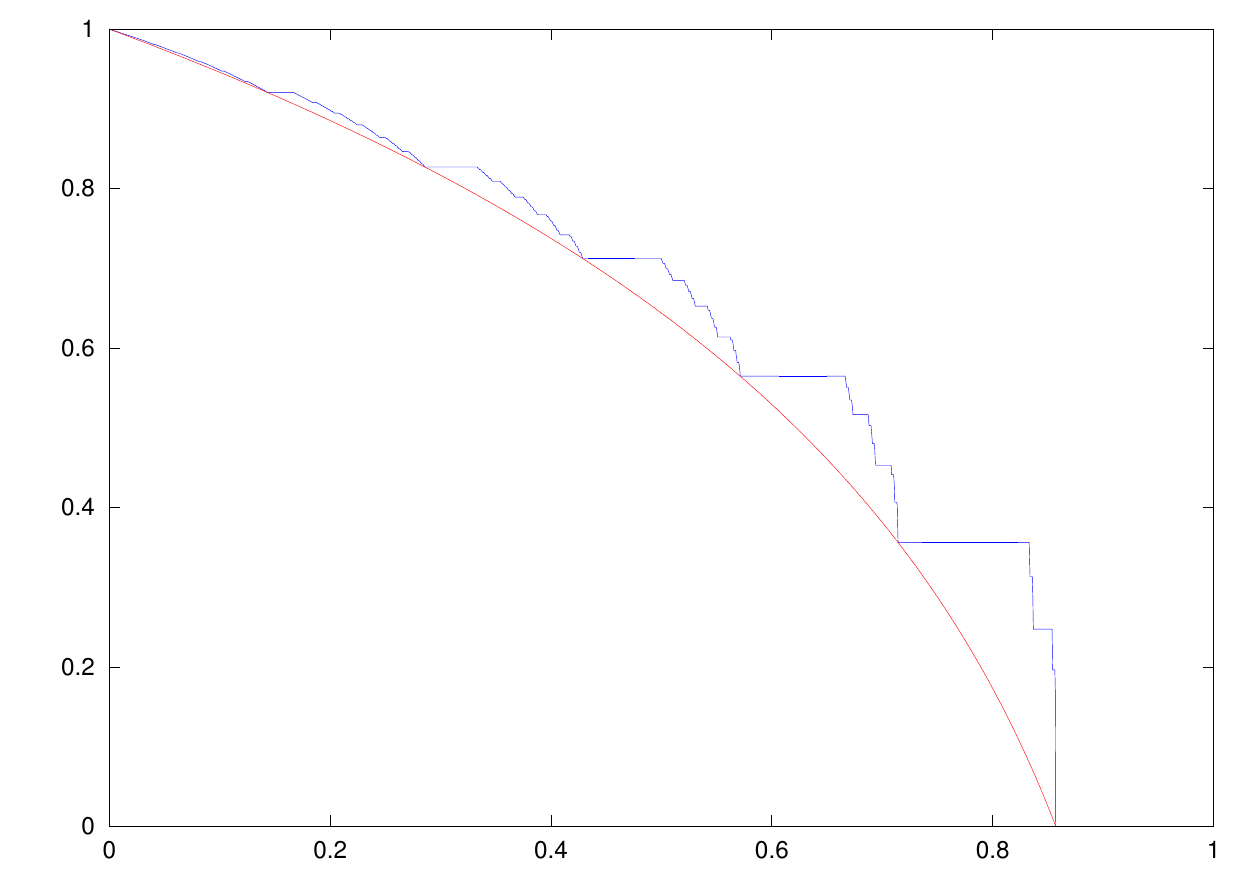}
\caption{Plots of $\phi$ and $\psi$ when $q=7$.}
\end{figure}

\begin{thm} For all $c \in [0,1)$ we have
\[ \frac{\phi(c)}{\psi(c)} \to 1 \]
as $q \to \infty$.
\end{thm}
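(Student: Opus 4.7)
The plan is to sandwich $\phi(c)$ between values of $\phi$ at two consecutive rationals $i/q$ and $(i+1)/q$, using the monotonicity of $\phi$ together with the exact formula for $\phi$ at such points coming from \refthm{thm:char}.

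I would first treat the base case $m=1$. For $0\le i<q$ we have $l_1(i)=1$ and $\bar{i}_1=i$, so the base-$q$ expansion of $q-\bar{i}_1=q-i$ is the single digit $a_1=q-i$, hence $g_i^1(x)=x-(q-i)$. Its Perron root is $q-i$, so
\[
\phi\!\left(\tfrac{i}{q}\right)=\frac{\log(q-i)}{\log q}=\psi\!\left(\tfrac{i}{q}\right),
\]
where the last equality is a direct substitution in the definition of $\psi$. Thus $\phi$ and $\psi$ already agree exactly at every point of the form $i/q$ with $0\le i\le q-1$.

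For the main step, fix $c\in[0,1)$. The case $c=0$ is immediate since $\phi(0)=\psi(0)=1$. Otherwise, for all $q$ sufficiently large we have both $c<(q-2)/q$ and $q(1-c)>2$, so $\psi(c)=\log(q(1-c))/\log q>0$. Set $i=\lfloor qc\rfloor$; then $c\in[i/q,(i+1)/q)$ and $i\le q-2$. The monotonicity of $\phi$ (Section~6) gives
\[
\frac{\log(q-i-1)}{\log q}\;\le\;\phi(c)\;\le\;\frac{\log(q-i)}{\log q}.
\]
Combining with the elementary bounds $q(1-c)\le q-i<q(1-c)+1$ and $q(1-c)-1\le q-i-1<q(1-c)$ (which follow from $qc-1<i\le qc$) upgrades this to
\[
\frac{\log(q(1-c)-1)}{\log q}\;\le\;\phi(c)\;\le\;\frac{\log(q(1-c)+1)}{\log q}.
\]
Dividing through by $\psi(c)=\log(q(1-c))/\log q$ yields
\[
\frac{\log(q(1-c)-1)}{\log(q(1-c))}\;\le\;\frac{\phi(c)}{\psi(c)}\;\le\;\frac{\log(q(1-c)+1)}{\log(q(1-c))},
\]
and since $q(1-c)\to\infty$ as $q\to\infty$ with $c$ fixed in $[0,1)$, both bounds tend to $1$.

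The only real obstacle is bookkeeping: one must take $q$ large enough that we stay in the region where $\psi>0$ (i.e.\ $c<(q-1)/q$) and where the lower sandwich bound is meaningful (i.e.\ $q(1-c)>2$). Once these are in hand, the argument reduces to the two floor-function estimates above and the $m=1$ case of \refthm{thm:char}.
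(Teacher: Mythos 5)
Your proposal is correct and takes essentially the same approach as the paper: sandwich $\phi(c)$ between $\phi(i/q)$ and $\phi((i+1)/q)$ with $i=\lfloor qc\rfloor$, evaluate these endpoints via the $m=1$ case of \refthm{thm:char} (giving $\log(q-i)/\log q$), and let $q\to\infty$. Your write-up is in fact a bit more careful than the paper's: you isolate the $c=0$ case, note the regime where $\psi>0$, and write the lower bound correctly as $\log(q-i-1)/\log q$ (the paper's $\log(q+1-i)/\log q$ is a sign slip).
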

\begin{proof} Let $c \in [0,1)$ be given. Then if we let $i = \lfloor qc \rfloor$ we have
\[ \frac{i}{q} \leq c \leq \frac{i+1}{q}. \] 
Now
\[ \phi\left(\frac{i}{q}\right) \geq \phi(c) \geq \phi\left(\frac{i+1}{q}\right) \]
and likewise for $\psi$ since both functions are decreasing. Due to the result we got earlier on constant intervals we have
\[ \frac{\log(q-i)}{\log q} \geq \phi(c), \psi(c) \geq \frac{\log(q+1-i)}{\log q} \]
so recalling the definition of $i$ we have
\[ \frac{\log(q-i)}{\log(q-i+1)} \geq \frac{\phi(c)}{\psi(c)} \geq \frac{\log(q-i+1)}{\log(q-i)} \]
and since $i \to \infty$ as $q \to \infty$, both the lower and upper bound converges to $1$. This finishes the proof.
\end{proof}

Since we also see that $\psi(c) \to 1$ as $q \to \infty$, we also have the following corollary.
\begin{cor} For all $c \in [0,1)$ we have
\[ \phi(c) \to 1 \text{ as } q \to \infty. \]
\end{cor}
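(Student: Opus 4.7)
The plan is to combine the theorem immediately above with the explicit formula for $\psi$ to conclude the corollary by a one-line limit computation. The only subtlety is making sure the argument still works at the boundary values, so I want to be careful about the case distinctions in the definition of $\psi$.

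First I would fix $c \in [0,1)$ and observe that since $\frac{q-1}{q} \to 1$ as $q \to \infty$, there is some $q_0$ (depending on $c$) such that $c < \frac{q-1}{q}$ for all $q \geq q_0$. For such $q$ we therefore have the closed-form expression
\[ \psi(c) = 1 + \frac{\log(1-c)}{\log q}. \]
Here $\log(1-c)$ is a finite constant (zero when $c=0$, strictly negative otherwise), while $\log q \to \infty$. Hence $\psi(c) \to 1$ as $q \to \infty$.

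Next I would invoke the preceding theorem, which asserts that $\phi(c)/\psi(c) \to 1$ as $q \to \infty$. Since $\psi(c) \to 1$ and in particular $\psi(c)$ is eventually bounded away from $0$ (for the fixed $c$ we chose), I can multiply the two limits to conclude
\[ \phi(c) = \psi(c) \cdot \frac{\phi(c)}{\psi(c)} \longrightarrow 1 \cdot 1 = 1, \]
which is exactly the statement of the corollary.

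The only potentially delicate point is ensuring that we are not in the ``otherwise'' branch of the definition of $\psi$ where $\psi(c) = 0$, because division by zero would break the argument; this is why I pass to $q$ large enough that $c < (q-1)/q$, which is automatic for fixed $c$. No other obstacle arises, so the whole proof is essentially one short paragraph.
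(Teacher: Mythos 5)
Your proposal is correct and is essentially the same argument the paper uses: the paper derives the corollary by observing that $\psi(c) \to 1$ as $q \to \infty$ and combining this with the preceding theorem that $\phi(c)/\psi(c) \to 1$. Your added care about the case $c < (q-1)/q$ for $q$ large is a sensible but minor elaboration of the same route.
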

The convergence is very slow though -- since $\phi$ and $\psi$ are equal on $q$ points we can just look at the convergence of 
\[ \frac{\log(1-c)}{\log q} \]
to zero which is easy to calculate.

\begin{figure}[!htp]
\centering
\includegraphics{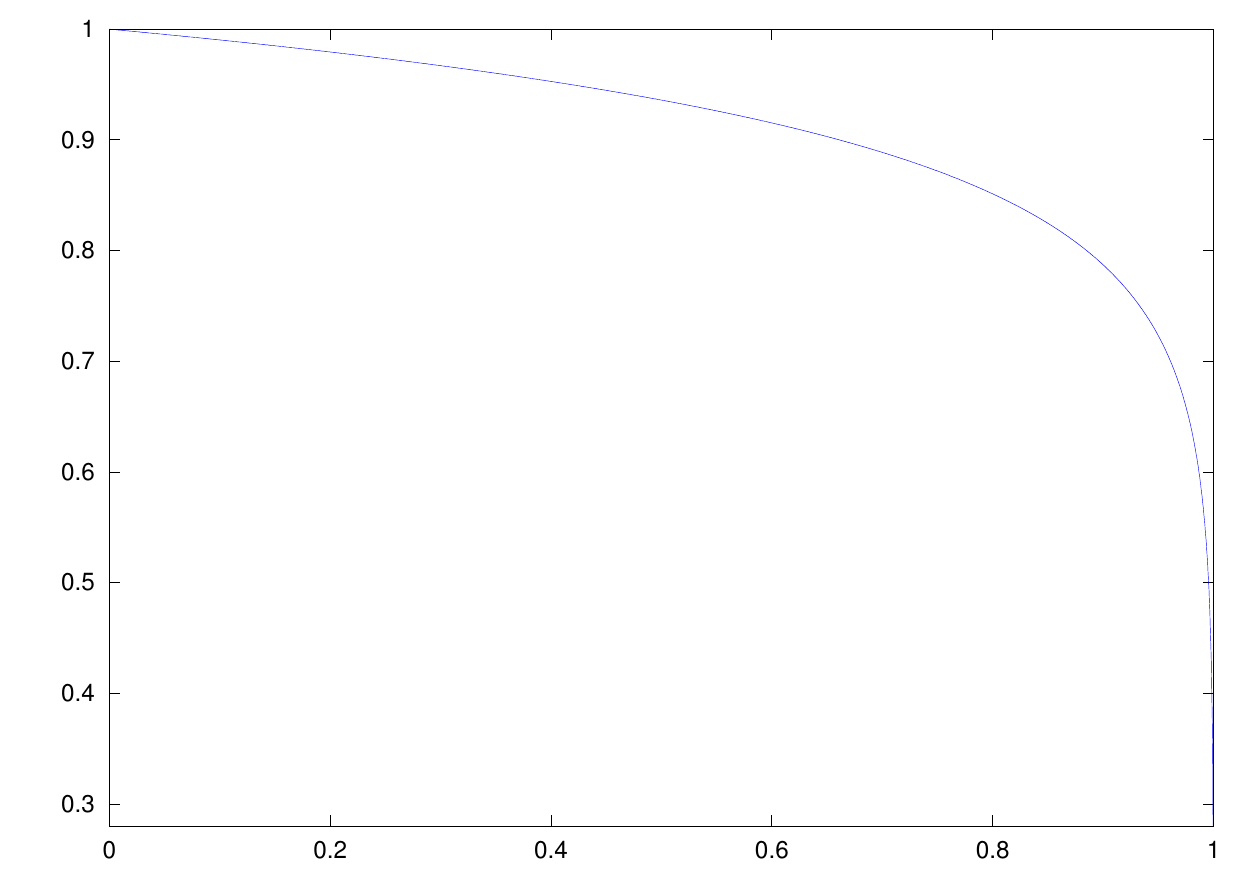}
\caption{Plot of $\phi$ when $q=50000$.}
\end{figure}

\FloatBarrier


\begin{thebibliography}{WWW} %<----- WWW breddeste indgang i nummereringen
\bibitem{lit:faddeev} D. K. Faddeev and V. N. Faddeeva,
  \emph{Computational Methods of Linear Algebra}, W.H. Freeman and
  Company, 1963. 
\bibitem{lit:nilsson} J. Nilsson, On numbers badly approximable 
  by
  dyadic rationals, \emph{Israel Journal of Mathematics} 171 (2009),
  pp. 93--110. 
\bibitem{lit:pesin} Y. Pesin, \emph{Dimension theory in Dynamical
    Systems}, The University of Chicago Press, 1997. 
\bibitem{lit:urbanski} M. Urbanski, On Hausdorff dimension of
  invariant sets for expanding maps of the circle, \emph{Ergorid
    Theory and Dynamical Systems} 6 (1986), pp 295--309. 
\end{thebibliography}
\end{document}